\renewcommand{\epsilon}{\varepsilon}
\newcommand{\ma}{{\operatorname{MA}}}
\newcommand{\poly}{{\operatorname{Poly}}}
\newcommand{\T}{{\mathbf T}^m}
\newcommand{\sm}{\setminus}
\newcommand{\szego}{Szeg\"o }
\newcommand{\Si}{\Sigma}
\newcommand{\inv}{^{-1}}
\newcommand{\kahler}{K\"ahler }
\newcommand{\wt}{\widetilde}
\newcommand{\wh}{\widehat}
\newcommand{\PP}{{\mathbb P}}
\newcommand{\N}{{\mathbb N}}
\newcommand{\R}{{\mathbb R}}
\newcommand{\C}{{\mathbb C}}
\newcommand{\Z}{{\mathbb Z}}
\newcommand{\CP}{\C\PP}
\renewcommand{\d}{\partial}
\newcommand{\dbar}{\bar\partial}
\newcommand{\ddbar}{\partial\dbar}
\newcommand{\E}{{\mathbf E}}
\renewcommand{\phi}{\varphi}
\newcommand{\ccal}{\mathcal{C}}
\newcommand{\dcal}{\mathcal{D}}
\newcommand{\lcal}{\mathcal{L}}
\newcommand{\ncal}{\mathcal{N}}
\newcommand{\ocal}{\mathcal{O}}
\newcommand{\pcal}{\mathcal{P}}
\newcommand{\scal}{\mathcal{S}}
\newcommand{\al}{\alpha}
\newcommand{\be}{\beta}
\newcommand{\ga}{\gamma}
\newcommand{\la}{\lambda}
\newcommand{\ep}{\varepsilon}
\newcommand{\De}{\Delta}
\newcommand{\om}{\omega}
\newcommand{\half}{{\frac{1}{2}}}
\newcommand{\vol}{{\operatorname{Vol}}}
\newcommand{\SU}{{\operatorname{SU}}}
\newcommand{\FS}{{{\operatorname{FS}}}}
\newcommand{\supp}{{\operatorname{Supp\,}}}
\renewcommand{\phi}{\varphi}
\newtheorem{theo}{{\sc Theorem}}[section]
\newtheorem{maintheo}{{\sc Theorem}}
\newtheorem{cor}[theo]{{\sc Corollary}}
\newtheorem{maincor}[maintheo]{{\sc Corollary}}
\newtheorem{lem}[theo]{{\sc Lemma}}
\newtheorem{prop}[theo]{{\sc Proposition}}
\newenvironment{rem}{\medskip\noindent{\it Remark:\/} }{\medskip}
\newenvironment{defin}{\medskip\noindent{\it Definition:\/} }{\medskip}
\title[Random complex fewnomials, I] {Random
complex fewnomials, I }
\author{Bernard Shiffman}
\address{Department of Mathematics, Johns Hopkins University, Baltimore, MD
21218, USA} \email{shiffman@math.jhu.edu}
\author{Steve Zelditch}
\address{Department of Mathematics, Northwestern University, Evanston, IL 60208, USA} \email{zelditch@math.northwestern.edu}
\thanks{Research of the first author partially supported by NSF grant DMS-0901333; research of the
second author partially supported by NSF grant  DMS-0904252.}
\date{\today}
\begin{document}

\begin{abstract} We introduce several notions of `random
fewnomials', i.e. random polynomials with a fixed number $f$ of
monomials of degree $N$. The $f$ exponents are chosen at random and then the
coefficients are chosen to be Gaussian random, mainly from the
$\SU(m + 1)$ ensemble.  The results give limiting formulas as $N
\to \infty$ for the expected distribution of complex zeros of a
system of $k$ random fewnomials in $m$ variables ($k \leq m$).
When $k = m$, for $\SU(m + 1)$ polynomials,  the limit is the
Monge-Amp\`ere measure of a toric \kahler potential on $\CP^m$
obtained by averaging a `discrete Legendre transform'  of the Fubini-Study symplectic potential at  $f$ points of the unit simplex $\Sigma\subset\R^m$.

\end{abstract}

\maketitle

\tableofcontents

\section*{Introduction}

This article is concerned with the distribution of complex zeros
of random systems of {\it fewnomials}. Fewnomials are polynomials
$$\sum_{\alpha: |\alpha| \leq N} c_{\alpha} z^{\alpha}: \;\;
\#\{\alpha: c_{\alpha} \not= 0 \} = f << N, $$
 with a relatively small number  $f << N$ of non-zero coefficients in comparison
 to the degree. For instance, $z^{10, 000} + 5 z^{1,000} - 3 z^{100} - 1$ is
 a fewnomial but $z^{10, 000} + z^{9, 999} + \cdots + z + 1$ is not. The fundamental
 idea is that the number of monomials, rather than the degree, measures the complexity
 of a polynomial system \cite{KH}.
 The purpose of this article to begin an investigation of fewnomial complexity
 bounds from a probabilistic viewpoint. In this article, we  introduce several natural
 ensembles of  `random fewnomial systems',  and study the expected distribution  of their complex zeros.
 In subsequent articles, we plan to study real zeros of real fewnomials and the more difficult
 problem of the correlations and variance of both real and complex
 zeros.
 The overall purpose is to study  from
  a statistical point of view
  Khovanskii's bounds \cite{KH} on the Betti numbers of real algebraic varieties given by fewnomials
  and on the number of real zeros of a full fewnomial system. Statistical properties of the topology of real algebraic varieties given by non-fewnomial polynomials (i.e., with all the coefficients nonzero) have been studied, for example, in \cite{Bu,Ro,ShSm,GW} and the references in these papers, but not much is known statistically about zeros of fewnomials.

To put our problem into context, let  us recall Khovanskii's
theorem:
 Let $P=
(P_1, \dots, P_m)$ denote a system of $m$ complex polynomials on
$(\C^*)^m$, and let $\Delta_j=\De_{P_j}$ denote the Newton polytope of
$P_j$, i.e the convex hull of the exponents appearing
non-trivially in $P_j$. Let $U \subset {\mathbf T}^m$ be an open set,
where ${\mathbf T}^m \subset (\C^*)^m$ is the real $m$-torus, and let
$N(P, U)$ be the number of zeros with arguments lying in $U$. When
$U = {\mathbf T}^m$, $N(P, U)$ counts the total number of zeros in $(\C^*)^m$ , which
by the Bernstein-Kouchnirenko theorem \cite{Be,Ko} can be expressed in terms of
the mixed volume $V(\Delta_1, \dots, \Delta_m)$.  Given an
angular sector determined by $U$, the number \begin{equation}
\label{BKK} S(P, U) := V(\Delta_1, \dots, \Delta_m)
\vol(U)/\vol({\mathbf T}^m) \end{equation} may be viewed as the `average
number' of complex zeros in the sector among random  polynomial systems
with the prescribed Newton polytopes $\Delta_j$. We denote this
class of polynomial systems by
$$P \in \poly{(\Delta_1, \dots, \Delta_m)}: = \{(P_1, \dots,
P_m): \;\; \Delta_{P_j} = \Delta_j\}. $$  Khovanskii's complex
fewnomials theorem \cite[\S 3.13,~Th.~2]{KH}  asserts that
\begin{equation} \label{KHOV} \sup_{P \in \poly{(\Delta_1, \dots, \Delta_m)}} \; |N(P, U) - S(P, U)| \leq \Pi (U, \Delta_1, \dots,
\Delta_m) \phi(m, f)
\end{equation} where $f$ is the number of non-zero coefficients of the system, and where
$\Pi (U, \Delta_1, \dots, \Delta_m) $ is the smallest number of
translates of a certain region $\Delta^*\subset{\mathbf T}^m$ required to cover the boundary of $U$.
One of the principal applications of this result is to give an
upper bound for the number $N_{\R}(P)$  of  real zeros of a
fewnomial system: If $U_j$ is a sequence of small balls around
$\{0\}$ shrinking to the point $\{0\}$, one has $\Pi = 1$ and
$S(P, U_j) \to 0$ and one obtains a bound of the form
\begin{equation} \label{BOUND} |N_{\R} (P) | \leq  \phi(m, f)
\end{equation}
entirely  in terms of the number of non-zero monomials appearing
in it and not its degree.
 We will refer to $f$
as the {\it fewnomial number} of the system.

Khovanskii's result may be interpreted in terms of the angular
projection $\mbox{Arg}: (z_1, \dots, z_m) = (\frac{z_1}{|z_1|},
\dots, \frac{z_m}{|z_m|})$ of the zero set to the real torus
${\mathbf T}^m$. His result (in the full system case) says that the
angular projection of the fewnomial zero set is rather evenly
distributed in ${\mathbf T}^m$. As a result, not too many zeros
concentrate on the real set where $\theta = 0$. Note that his
measure of the concentration, taking the supremum in (\ref{KHOV}),
is very astringent and is governed by the extreme cases. The idea
of our work is to study its  average value over fewnomial systems
and polynomials.

The motivation for the statistical study  is that the known
estimates of $\phi(m, f)$ are very large and are widely
conjectured to overestimate the the bound by many orders of
magnitude. Khovanskii's bound states that $ \phi(m, f) \leq 2^m
2^{f (f - 1)/2} (m + 1)^f$. See \cite{BBS,BRS} for relatively
recent bounds and \cite{So,S,St2}  for further background. A
conjecture of Kouchnirenko, as corrected and refined by a number
of people, states that the maximum number of real zeros in the positive
real quadrant should be roughly $|f|^{2 m}$ where $|f|$ is the
total  number of monomials in the system. The uncertainty as to
the true order of magnitude of $\phi(m, f)$ suggests studying the
bound probabilistically. The bound (\ref{BOUND}) resembles a
variance estimate although it is measured in  the much more
difficult sup norm. It reflects the extremal behavior, which may
only occur very rarely. This raises the question, what is the
expected or average  order of magnitude of the variance?

In this article we begin the study of random fewnomial systems by
introducing several  probability measures on spaces of complex fewnomials---
i.e., on the set of pairs $(S, P)$ of spectra and polynomial
systems with the given spectra. Our main results give the expected
limit distribution of complex zeros in the ensembles. For example, Theorem \ref{main} says that for a random system
$(P^N_1, \dots, P^N_m)$  of
fewnomials on $\C^m$, each of fewnomial number $f$ and of degree $N$, where the exponents are chosen uniformly at random and  the coefficients are chosen at random from  the
$\SU(m + 1)$ ensemble (described below), the expected distribution of zeros in
$(\C^*)^m$ is asymptotic to
$$ N^m  \det_{1\le p,q \le m} \left(\frac{\d^2}{\d\rho_p\d\rho_q}\int_{\Sigma^f}
 \max_{j = 1, \dots, f} \left[\langle \rho, \lambda^j
\rangle -\langle
\wh{\lambda^j\,}, \log \wh{\,\lambda^j\, } \rangle  \right]\, d \lambda^1 \cdots d \lambda^f\right)d\rho_1\cdots d\rho_m \frac{d\theta_1}{2\pi}\cdots \frac{d\theta_m}{2\pi} , $$
where $z=(e^{\rho_1/2+i\theta_1},\dots,e^{\rho_m/2+i\theta_m} )$, $\Sigma$ is the unit simplex in $\R^m$ with probability measure $d\la={m!} \,d\la_1\cdots d\la_m$, and $\wh \la= (1-|\la|,\la_1,\dots,\la_m)$.

\subsection{Fewnomial ensembles}

  We consider
several natural definitions which are motivated by different kinds
of applications. More precise and detailed definitions are given in
\S \ref{FEWENSEMBLES}.

We denote the space of all complex holomorphic polynomials of
degree $N$ by $\poly(N)$.
  By the {\it spectrum} (or {\it support}) of
a polynomial $P$, we mean the set $S_P$ of exponents of
its non-zero monomials. We denote the space of polynomials with
spectrum {\it contained in} $S$ by
\begin{equation} \poly(S) = \{P(z_1, \dots, z_m) = \sum_{\alpha \in
S} c_{\alpha} \chi_{\alpha}(z), \;\;\chi_{\alpha}(z) :=
z^{\alpha}\}, \;\; S\subset \N^m.
\end{equation}
The {\it Newton polytope} of $P$ is the  convex hull $\Delta_P$ of the spectrum $S_P$.  More generally, we consider
 a system of $k \leq m$ polynomials $P_1, \dots, P_k$
in $m$ complex variables, and write
\begin{equation} \poly(S_1, \dots, S_k) =\{(P_1, \dots, P_k): \;\;
P_j \in \poly(S_j)\}.
\end{equation}
When $k = m$ we speak of a `full' system, where the simultaneous zeros are almost always zero-dimensional.

In all of our definitions of random fewnomial system,  the numbers
$f_j$ of elements of each spectrum $S_j$ and the degrees $N$ of
the polynomials $P_j$  are fixed. We then randomize with respect
to the spectra $S_j$ and with respect to the coefficients $c_{j
\alpha}.$ With regard to the spectra, there are several natural
choices of probability measure:

\begin{itemize}

\item [(I)] Fixed spectrum up to dilation: Here, we fix a spectrum
$S$, and then dilate it deterministically as the degree $N$ grows,
i.e. scale $S \to NS$. This notion of random fewnomial is
analogous to our notion of random polynomial with fixed Newton
polytope in \cite{SZ2}. The main difference is that the `polytope'
which we dilate is non-convex; indeed, it just  consists of a
fixed set of $f$ points. In fact, the  techniques of \cite{SZ2}
generalize quite naturally to all  non-convex polytopes. The only
randomness is then with respect to the coefficients. The result is
given in Theorem \ref{EZa}.

\item [(II)] Dilates of a random spectrum from a polytope
$\Delta$: in this ensemble, we fix $\Delta$, choose the spectrum
at random from $\Delta$ and then dilate the resulting spectrum. It
is only a small step from case (I), but is apparently important in
computational work. (We thank Maurice Rojas for emphasizing the
interest of this case.) The result is
given in Corollary \ref{EZc}.

\item [(III)] Random spectra of degree $N$:  At the opposite extreme, we
may choose the spectra completely randomly (with respect to
counting measure) from all possible $f$-element sets of exponents $\al\in\N^m$ of length $|\al|=|\al_1|+\dots+|\al_m|\le N$---i.e., subsets of the integral
simplex $\Z^m\cap N \Sigma$, where $\Sigma=\{x\in\R^m:x_j\ge 0, \sum x_j\le 1\}$ is the unit $m$-simplex,
 and  $N \Sigma$ is its dilate by $N$. We put uniform
measure on $\Z^m \cap N \Sigma$ and then choose spectra $S_j\subset \Z^m\cap N \Sigma$ of
fixed cardinality $f$ independently and uniformly relative to
counting measure. The result is
given in Theorem \ref{main} for $\SU(m+1)$ fewnomials and in Theorem \ref{maintoric} for general toric \kahler potentials.

\item [(IV)] Random spectra contained in  fixed Newton polytopes: Rather than just consider the
simplex,  and motivated by Khovanskii's variation formula, we now
fix $k$ convex lattice polytopes $\Delta_1,\dots,\Delta_k$ and then choose random
spectra $S_j \subset \Delta_j \cap \Z^m$ with fixed cardinalities
$f_j$ independently with uniform measures  from these polytopes. We
then replace the $\Delta_j$ by their dilations by $N\Delta_j$ to obtain higher degree ensembles. We note that the convex hull of $S_j$ is contained in $ \Delta_j$ but
equality rarely occurs. The result is
given in Theorem \ref{mainIV}.

\end{itemize}

Having decided on an ensemble of spectra $\alpha$, we then define
probability measures on the coefficients $c_{\alpha}$. We only
consider Gaussian probability measures and make standard choices
which are consistent with Khovanskii's bound. A key point is that
Gaussian measures are determined by inner products in the space of
polynomials. We choose the inner products as in \cite{SZ,SZ3,SoZ,SoZ3}
to be those $G_N(\phi, \nu)$  of the weighted $L^2$ spaces
$L^2(\CP^m, e^{- N \phi} d\nu)$ of pluri-potential theory, which
are specified by a \kahler potential $\phi$ or Hermitian metric $h
= e^{- \phi}$ and a measure $d\nu$ on $\CP^m$. It is natural to
restrict to $\phi, \nu$ which are {\it toric}, i.e. invariant
under the standard ${\mathbf T}^m$ torus action on $\CP^m$. Then the
monomials $\{z^{\alpha}\}$ are always orthogonal and the Gaussian
ensembles only differ in the $L^2$- norms \begin{equation}
\label{QN} Q_{G_N(\phi, \nu)}(\alpha) =
||z^{\alpha}||^2_{G_N(\phi, \nu)} = \int_{\C^m} |z^{\alpha}|^2
e^{- N \phi(z)} d\nu(z) \end{equation}
 of the monomials, viewed as homogeneous polynomials of degree $N$ (so that $|\alpha| \leq N$); equivalently, the Gaussian measures only
differ  in the variances of the coefficients in the monomial
basis. We refer to \S \ref{BACKGROUND} for details.

Since our emphasis is on the fewnomial aspects we only consider
some basic examples of $(\phi, \nu)$. In particular, we
concentrate on the model case of  $\SU(m + 1)$ polynomials,  where
$\phi(z) = \log (1 + ||z||^2)$ is the Fubini-Study potential and
where $d\nu = \frac 1{m!}(\frac{i}{2 \pi} \ddbar \phi)^m$ is the Fubini-Study
volume form.

Given the inner product $G_N(\phi, \nu)$  underlying the Gaussian
measure, we normalize the monomials to have $L^2$-norm equal to
one, by putting \begin{equation} \label{phialpha} \phi_{\alpha} =
\frac{z^{\alpha}}{Q_{G_N(\phi, \nu)}(\alpha)}, \end{equation} and
then express polynomials of degree $N$  as the orthonormal sums
\begin{equation} \label{PN} P_N = \sum_{\alpha \in N \Sigma} c_{\alpha} \phi_{\alpha}.
\end{equation}
The Gaussian measure $\gamma_N$ induced by $G_N(\phi, \nu)$ is
defined by the condition that the $c_{\alpha}$ are independent complex normal variables of mean zero and
variance one.

The Gaussian measure  $d\gamma_N $ on $\poly(N)$ induces
conditional Gaussian measures $\gamma_{N | S}$ on the spaces
$\poly(S)$; i.e.,
\begin{equation}\label{gammaNS} d\ga_{N|S}(P_N) = \frac 1{\pi^{|S|}}e^{-\sum|c_\al|^2}\,dc\,,\quad P_N=
\sum_{\alpha \in S} c_{\alpha} \phi_{\alpha}\,,
\end{equation} where $\phi_\al$ is given by \eqref{phialpha}.
Probabilities relative to $\gamma_{N|S}$  can be considered as
conditional probabilities; i.e., for any event $E$,
$$\mbox{Prob}_{\gamma_N}\{ P \in E | S_P = S\} = \mbox{Prob}_{\gamma_{N|S}} (E).$$
We denote by $\E_{N | S}$ the expectation with respect to the
conditional Gaussian measure $\gamma_{N | S}$.  For a further discussion of conditional probabilities on polynomial (and more general) ensembles, see \cite{SZZ}

Some of the possible (and well-studied) choices of the inner
products and Gaussian measures are the following:

\begin{itemize}

\item[(a)] The $\SU(m + 1)$ ensembles defined above. On all of $\poly(N)$
the expected distribution of zeros for each $N$ is uniform with
respect to the $\SU(m + 1)$-invariant volume form on $\CP^m$ (i.e.
the Fubini-Study form).

\item[(b)] General toric Gaussian measures induced by ${\mathbf T}^m$-invariant  Hermitian metrics $h = e^{- \phi}$ on the
line bundle $\ocal(1) \to \CP^m$ with positive curvature form
$\omega_{\phi} = i \ddbar \phi$ (i.e. with a plurisubharmonic
weight) and with $\nu = dV_{\phi} := \frac 1{m!}(\frac{i}{2 \pi} \ddbar \phi)^m$.
We suppress geometric notions in this article, but state the
general result in Theorem \ref{maintoric} (see \S \ref{TORIC}).

\item[(c)] The $m$-dimensional Kac-Hammersley ensembles, where  $Q_N(\alpha) = 1$. Here, $\phi
\equiv 0$ (the opposite extreme from subharmonic weights) and $\nu
= \delta_{{\mathbf T}^m}$. The norms of the monomials are independent of
$N$ and only involve a fixed inner product on $\C^m$. In dimension
one, the zeros of degree $N$ polynomials (with full spectrum $\Z\cap [0,N]$) concentrate on the unit circle   as $N\to \infty$ \cite{H}, and in dimension $m$ the zeros of degree $N$ polynomials concentrate on the torus ${\mathbf T}^m$ \cite{BS}. We briefly discuss this ensemble in \S \ref{KAC}.

\end{itemize}

\begin{rem}   Khovanskii \cite{KH} and
Kazarnovskii \cite{Ka1, Ka2} consider ensembles where one fixes the  spectra $S_j$  and chooses
coefficients at random from the ensemble
\begin{equation} \label{KHENS} \CP^{|S_1| - 1} \times \CP^{|S_2| -
1} \times \cdots \times \CP^{|S_m| - 1},\end{equation}  i.e., the
product projective space of coefficients of polynomials with the
prescribed spectra, equipped with the probability measure obtained by
taking the product of (normalized) Fubini-Study volume measures on
the factors. These ensembles amount to choosing the complex
coefficients at random from the Euclidean spheres $S^{2|S_j| -1}$
and are easily seen to be equivalent to Gaussian random polynomials $\sum_{\alpha} c_{\alpha}
z^{\alpha}$ with $c_{\alpha}$ independent complex normal variables
of mean zero and variance one---i.e., they are equivalent to the Kac-Hammersley ensembles described above.
\end{rem}

\subsection{Expected distribution of zeros}

Having fixed an ensemble of fewnomials, our interest is in the
configuration of zeros
$$Z_{P_1, \dots, P_k}:=\{z\in(\C^*)^m: P_1(z)=\cdots =P_k(z)=0\}$$
of a random fewnomial system with $k \leq m$. Here, $\C^*=\C\sm\{0\}$. We refer to $k = 1$
as the random fewnomial hypersurface case and to $k = m$ as the
point case.

 To each zero set we
 associate the  current of integration
 $\left[Z_{P_1, \dots,
 P_k}\right]\in \dcal'^{k,k}((\C^*)^m)$ over the zeros of the system:
 $$\left(\left[Z_{P_1, \dots,
 P_k}\right],\,\psi\right) = \int_{Z_{P_1, \dots,
 P_k}}\psi\,,\qquad \psi \in \dcal^{m-k,m-k}((\C^*)^m)\,.$$
  In the point case, $\left[Z_{P_1, \dots,
 P_m}\right] $  is obtained by putting point masses at each zero, $$\left[Z_{P_1, \dots,
 P_m}\right] = \sum_{z \in Z_{P_1, \dots, P_m}} \delta_z\,,$$
 and the expected distribution   is determined by
the expected values of the random variables
  $$\ncal^U_N(P_1^N,\dots,P_m^N):=\left[Z_{P_1, \dots,
 P_m}\right](U)=\#\{ z\in
U: P_1^N(z)= \cdots =P_m^N(z)=0\}$$ counting the number of
 zeros in an open set $U \subset (\C^*)^m$.
 
The expected distribution of zeros varies widely among the
ensembles above. This is not surprising if one recalls, for
instance, that zeros of random Kac polynomials concentrate on the
unit circle, while those of $\SU(2)$ polyonomials are uniform with
respect to the standard area form of $\CP^1 = \C \cup {\infty}$,
while those of polynomials with fixed Newton polytope have a
forbidden region where zeros have an exotic concentration. In
particular, the `average number' $S(P, U)$ of zeros in the angular
sector $U$ considered in Khovanskii's variance estimate
(\ref{KHOV}) is itself a random variable which depends on the
convex hull of the spectrum of $P$.

\subsection{Statement of results}

 We will consider the zero distribution as a measure on
 $(\C^*)^m = \R_+^m \times \T$ and denote points by $z = e^{\rho/2 + i \theta}$ in
 multi-index notation. Here, $\T$ denotes  the real torus $\T = (S^1)^m \subset
 (\C^*)^m$.
Given a locally bounded plurisubharmonic function $\phi$ we denote by
 $\ma(\phi)$ the associated Monge-Amp\`ere measure
 $$\ma(\phi) = \left(\frac i{2\pi}\ddbar \phi\right)^m \in \dcal'^{m,m}(\CP^m).  $$ When $\phi$ is
invariant under the $\T$  action  on $(\C^*)^m$,
 then
 \begin{equation} \label{MONGE} \ma(\phi) = \det\left(\frac 1{2\pi} D^2_{\rho} \phi \right)d\rho\, d \theta,  \end{equation}
  where $D^2_{\rho}$ is the real Hessian
 on $\R^m$.

 Our results are asymptotic formulas as the degree $N \to \infty$,
but with  the number $f$  of monomials held  fixed. For each
ensemble, the
 limit distribution of zeros in the point case is the
 Monge-Amp\`ere measure of a limit $\T$-independent potential, and
 thus the formula is of the type (\ref{MONGE}). The results are
 very similar for the Fubini-Study $\SU(m + 1)$ ensemble and for
 general toric Gaussian measures based on inner products
 $G_N(\phi, dV_{\phi})$ with $\phi$ a toric \kahler potential.
 Hence we concentrate on the $\SU(m + 1)$ case, and only briefly
 indicate the modifications needed for the general toric \kahler
 case.

 Our first result concerns the ensemble with dilates of a fixed
 spectrum. Since the lattice points lie in $\R^m$ we use upper
 subscripts to index the different points in the spectrum and
 lower subscripts to index their coordinates. We recall that
 $\E_{N | S}$ refers to the expectation with respect to the
 conditional Gaussian measure $\gamma_{N | S}$.

 \begin{maintheo}\label{EZa} Let $S=\{\la^1,\dots,\la^f\}$ be a fixed spectrum consisting of $f$ lattice points in $p\Si$. For random $m$-tuples $(P^N_1, \dots, P^N_m)$  of
fewnomials in $\poly(NS)$, with coefficients chosen from the $\SU(m + 1)$
ensembles of degree $pN$, the expected distribution of zeros in $(\C^*)^m$ has the asymptotics
$$N^{-m} {\bf E}_{Np|NS} [Z_{P^N_1, \dots, P^N_m}]\to  p^m\, \ma\left(
 \max_{\la\in S} \left[\langle \rho, \lambda
\rangle -\langle
\wh\lambda^p, \log \wh\lambda^p \rangle  \right]\right). $$ Here, $\wh \la^p= (p-|\la|,\la_1,\dots,\la_m)$ and $\log\wh\la^p=
(\log(p-|\la|),\log\la_1,\dots,\log\la_m)$.

\end{maintheo}

For a spectrum $S\subset p\Sigma$, we let $\lcal_S^p$ denote the Monge-Amp\`ere potential in Theorem \ref{EZa}:
\begin{equation}\label{Legendre} \lcal_S^p(\rho):= \max_{\la\in S}
 \left[\langle \rho, \lambda
\rangle -\langle
\wh\lambda^p, \log \wh\lambda^p \rangle  \right], \quad \rho\in\R^m\,.
\end{equation}
It is kind of discrete Legendre transform of the entropy function
$\langle \wh\lambda^p, \log \wh\lambda^p \rangle $, which is the
symplectic potential corresponding to the Fubini-Study \kahler
potential.

We note that the expected limit distribution is a singular measure
invariant under rotations of the angular variables and supported
along the 0-dimensional corner set  of the piecewise linear
function $ \lcal_S^p(\rho)$. This reflects the heuristic
principle that the zeros of a fewnomial should come from its
sub-fewnomials with fewnomial number $f=m+1$.

With no additional effort, we could fix the spectra separately for
each polynomial in the system, and obtain:

\begin{maintheo}\label{EZb} Let $S^1,\dots,S^k$ be fixed finite spectra consisting of lattice points in $p\Si$, where $1\le k\le m$. For random fewnomial $k$-tuples $(P^N_1, \dots, P^N_m)$ in $\poly(NS^1)\times\cdots\times\poly(NS^k)$, with coefficients chosen from the $\SU(m + 1)$
ensembles of degree $pN$, the expected zero current in $(\C^*)^m$ has the asymptotics
$$N^{-k} {\bf E}_{NS^1,\dots,NS^k} [Z_{P^N_1, \dots, P^N_k}]  \to    \bigwedge_{j=1}^k \left(\frac {ip}{2\pi}\ddbar\lcal_{S_j}^p(\rho)\right).$$
\end{maintheo}

We now state the result for fewnomial ensembles in which we
randomize the spectra in the sense of (II):

\begin{maincor}\label{EZc} Let $\Delta\subset p\Sigma$ be a (fixed) Newton polytope,
let $S^1,\dots,S^k$ be random spectra contained in $\De$ with fewnomial
number $f$, and let $P^N_1,\dots,P^N_k$
 be random fewnomial $k$-tuples $(P^N_1, \dots, P^N_m)$ in $\poly(NS^1)\times\cdots\times\poly(NS^k)$, with coefficients chosen from the $\SU(m + 1)$
ensembles of degree $pN$. Then the expected zero current in $(\C^*)^m$ has the asymptotics
$$N^{-k} {\bf E} [Z_{P^N_1, \dots, P^N_k}]  \to  \frac 1{C(\Delta,f)^k}  \left(\frac {ip}{2\pi}\ddbar\sum_{S\in\ccal(\Delta,f)}\lcal_{S}^p(\rho)\right)^k.$$

\end{maincor}

Next, instead of dilating random spectra, we  consider completely random spectra as described in (III) and we obtain:

\begin{maintheo}\label{main} Let $1\le k\le m$, and let $(P_1, \dots, P_k)$ be a random system of
fewnomials of fewnomial number $f$ and of degree $N$, where the spectra $S_j$ are chosen uniformly at random from
the simplex $N \Sigma$ and the coefficients are chosen from  the
$\SU(m + 1)$ ensemble. Then the expected zero current in
$(\C^*)^m$ has the asymptotics
$$N^{-k} {\bf E}_{N,f} [Z_{P^N_1, \dots, P^N_k}] \to    \left(\frac i{2\pi} \ddbar\int_{\Sigma^f}
 \max_{j = 1, \dots, f} \left[\langle \rho, \lambda^j
\rangle -\langle
\wh{\lambda^j\,}, \log \wh{\,\lambda^j\, } \rangle  \right]\, d \lambda^1 \cdots d \lambda^f\right)^k . $$
Here, $\wh \la=\wh\la^1= (1-|\la|,\la_1,\dots,\la_m)$, $d\la={m!} \,d\la_1\cdots d\la_m$.
\end{maintheo}

The limit measure is thus the Monge-Amp\`ere measure of the limit
potential obtained by averaging  the discrete Legendre transform $\lcal_{\{\lambda_1, \dots, \lambda_f\}}^1(\rho)$ from Theorems \ref{EZa} and \ref{EZb} (with $p=1$)  over all choices of  points $\la^1,\dots,\la^f$ of $\Sigma$. 

We note that the averaging smooths out the corners.  Indeed, we have the following more explicit formula for the expected limit distribution:

\begin{maincor}\label{maincor} Let $(P_1, \dots, P_k)$ be the random system of Theorem \ref{main}. Then 
$$N^{-k} {\bf E}_{N,f} [Z_{P^N_1, \dots, P^N_k}]
\to  \left\{\om_\FS - \frac i{2\pi}\int_0^\infty \ddbar\big(\left[1-D_b(t;\rho)\right]^f\big)\,dt\right\}^k \,,$$ where
$$D_b(t;\rho)=m!\,\vol\left(\{\la\in\Si:\langle
\wh\lambda, \log \wh\lambda \rangle - \langle \rho, \lambda
\rangle +\log\left(1+|e^\rho|\right)\le t\}\right)\,.$$
\end{maincor}

Here, $\om_\FS=\frac
i{2\pi}\ddbar \log (1+|e^\rho|)$ is the Fubini-Study \kahler form
on $(\C^*)^m\subset\CP^m$. The quantity  $D_b(t;\rho)$ is the distribution function for the pointwise logarithmic decay rate $b_\la(\rho)$ of the monomials $\phi_{N\la}$ (see \S \ref{MASS}), regarded as a random variable (with parameter $\rho$) on $\Sigma$.  Note that the integral in Corollary \ref{maincor} is actually over a bounded interval.

We can also generalize Theorem \ref{main} to the ensemble (IV):

\begin{maintheo}\label{mainIV} Let $1\le k\le m$, let $\Delta_1\subset p_1\Sigma,\dots,\Delta_k\subset p_k\Sigma$ be Newton polytopes and let $(P_1, \dots, P_k)$ be a random system of
fewnomials of fewnomial numbers $f_1,\dots,f_k$ respectively, where the spectra $S_j$ are chosen uniformly at random from
the simplices $N \Delta_j$ and the coefficients are chosen from  the
$\SU(m + 1)$ ensemble. Then the expected zero current in
$(\C^*)^m$ has the asymptotics
$$N^{-m} {\bf E} [Z_{P^N_1, \dots, P^N_k}] \to   \bigwedge_{j=1}^k \left(\frac i{2\pi}\,\frac{p_j}{\vol(\Delta_j)^{f_j}}\, \ddbar\int_{\Delta_j^{f_j}}
 \max_{l = 1, \dots, f_j} \left[\langle \rho, \lambda^l
\rangle -\langle
\wh{\lambda^l\,}, \log \wh{\,\lambda^l\, } \rangle  \right]\, d \lambda^1 \cdots d \lambda^{f_j}\right) . $$
Here, $\wh \la=\wh\la^1= (1-|\la|,\la_1,\dots,\la_m)$, $d\la= \,d\la_1\cdots d\la_m$.
\end{maintheo}

The key analytical ingredient in the proofs of these results is an
asymptotic formula for  the expected {\it mass density} of the
above systems of random polynomials as $N \to \infty$. It is given
by the conditional \szego kernel $\Pi_{N, Q | S}$ with respect to the
norm $Q_{G_N(\phi,\nu)}$ (\ref{QN}) and spectrum $S$, i.e. the
kernel of the
 orthogonal projection (\szego kernel) onto the subspace of
polynomials under consideration:
\begin{equation}\label{Eszego}
\E_{N,Q |S}\left(|P(z)|^2_{N\phi}\right) = \sum_{\alpha \in S} \frac{
|\chi_\al(z)|_{N\phi}^2}{\|\chi_{\alpha}\|_Q^2}=\Pi_{N, Q|S
}(z,z)\;.\end{equation} Thus, the results depend on the
asymptotics of the \szego kernels $\Pi_{N, Q |S }(z,z)$.

\subsection{More general toric weights}

We briefly indicate the generalization when the $\SU(m + 1)$
(Fubini-Study) inner product on $\poly(N)$ is replaced by
$G_N(\phi, dV_{\phi})$ for a general toric \kahler potential
$\phi$.

The polytope $P$ of the toric variety is
 defined by a set of linear inequalities
$$\ell_r(x): =\langle x, v_r\rangle-\lambda_r \geq 0, ~~~r=1, ..., d, $$
where $v_r$ is a primitive element of the lattice and
inward-pointing normal to the $r$-th $(m-1)$-dimensional facet
$F_r = \{\ell_r = 0\}$  of $P$.

A     $\T$-invariant \kahler potential on $(\C^*)^m$  defines a
real convex function on $\rho \in \R^m$. Its Legendre transform
$$u_{\phi}(x): = \lcal \phi (x): = \sup_{\rho} \left( \langle x,
\rho \rangle - \phi(e^{\rho}) \right) $$ is the {\it symplectic
potential} $u_{\phi}$. Equivalently, for  $x \in P$,  there is a
unique $\rho$ such that $ \nabla_{\rho} \phi = x$, and
$u_{\phi}(x) = \langle x, \rho_x \rangle - \phi(\rho_x)$. In the
Fubini-Study case, $P = \Sigma$,  $\phi = \log (1 + e^{\rho})$,
and $u_{FS}(x) =  \sum_k \ell_k(x) \log \ell_k(x)$ where
$\ell_k(x) = x_k$ for $k = 1, \dots, m $ and $\ell_{m + 1}(x) = 1
- |x|$ where $|x| = x_1 + \cdots + x_m$  (in multi-index notation
on $\R^m$). Thus,
$$u_{FS}(\lambda) = \langle \wh\lambda, \log \wh\lambda \rangle.
$$

The \kahler potential is the Legendre transform $\lcal
u_{\phi}(\rho)$ of its symplectic potential. If we allowed all
possible spectra in the ensemble (hence not a fewnomial ensemble),
the discrete Legendre transforms with respect to $f$-element
subsets would converge to the usual Legendre transform and the
potential in (\ref{Legendre}) would become $\phi$. Thus, the
impact of the restriction to $f$ monomials is that in place of the
Legendre transform we have an average of discrete Legendre
transforms.

As this indicates, the result for a general toric \kahler Gaussian
ensemble for $\CP^m$ and polytope $\Sigma$,  defined by $G_N(\phi,
dV_{\phi})$, is the following:

\begin{maintheo}\label{maintoric} Consider the ensembles of type {\rm (III)} as in Theorem \ref{main},
but with Gaussian measures induced by the inner product $G_N(\phi,
dV_{\phi})$ corresponding to a toric \kahler potential on $\CP^m$.
Then the expected distribution of zeros in $(\C^*)^m$ has the
asymptotics
$$N^{-m} {\bf E} [Z_{P^N_1, \dots, P^N_k}] \to    \left(\frac i{2\pi} \ddbar\int_{\Sigma^f}
 \max_{j = 1, \dots, f} \left[\langle \rho, \lambda^j
\rangle - u_{\phi}({\lambda}) \right]\, d \lambda^1 \cdots d
\lambda^f\right)^k .
$$
\end{maintheo}

The proof is almost the same as for the Fubini-Study case and is
indicated in \S \ref{TORIC}. In \S \ref{KAC}, we also indicate the
modifications in the case of the fewnomial Kac-Hammersley ensemble.

\section{Preliminaries}\label{BACKGROUND}

In this section, we review the relation between inner products on
spaces of polynomials and associated Gaussian measures on the
space. The inner products implicitly involve a choice of \kahler
metric on $\CP^m$. The associated \kahler potential determines the
shape of the modulus of each monomial and its concentration
properties.

 We may identify a not necessarily homogeneous polynomial $f$ on $\C^m$ of
degree $\leq N$ by its homogenization as a polynomial
$$F(\zeta_0, \dots,
\zeta_m) = \sum_{|\la| = N} C_\la \zeta^\la \qquad (\zeta^\la =
\zeta_0^{\la_0} \cdots \zeta_m^{\la_m})$$ of degree $N$ in $m+1$
variables, where
$$f(z_1,\dots,z_m)=F(1,z_1,\dots,z_m) = \sum _{|\al|\le N} c_\al
z^\al \qquad (z^\al=z_1^{\al_1} \cdots z_m^{\al_m}),$$ where
$c_\al=C_{\hat\al^N}$, $\hat\al^N=(N-|\al|,\al_1,\dots,\al_m)$,
$|\al|= \sum_{j=1}^m\al_j$. Homogeneous polynomials of degree $N$
on $\C^{m + 1}$ are equivalent to holomorphic sections $H^0(\CP^m,
\ocal(N))$ of the $N$th power of the hyperplane section bundle.
This geometric identification is useful in interpreting the
concentration properties of monomials in terms of curvature.

We let $e_0\in H^0(\CP^m\ocal(1))$ be the degree 1 polynomial $e_0(\zeta_0,\dots,\zeta_m)=\zeta_0$. Then $e_0$ is a local
frame  over the affine chart $U_0 = \{\zeta_0\neq 0\}\approx\C^m$.
We fix a Hermitian metric $h$ on $\ocal(1)$. In the local frame $e_0$, the
metric has the local expression $h = e^{- \phi}$, where $\phi$ is
known as the \kahler potential.  The \kahler form is denoted
by $\omega_{\phi} = \frac i{2\pi}\, \ddbar \phi$.

We  define the inner product on
$\poly(N\Sigma)$:
\begin{equation}\label{IP}\langle f, \bar g \rangle_h =   \frac{1}{m!} \int
_{\C^m} f(z)\overline{g(z)} e^{- N \phi(z)}
\,\om_{\phi}^m(z),\quad f,g\in \poly(N \Sigma). \end{equation}

The inner product is determined by the matrix of inner products on
the distinguished basis of monomials $\chi_{\alpha}$. All of our
inner products are $\T$-invariant and hence the monomials are
automatically orthogonal. The inner products are then determined
by the norming constants (\ref{QN}), specifically,
\begin{equation}\label{NORM} Q(\al)=Q_{G_N(\phi, dV_{\phi})} (\alpha)  =   \frac{1}{m!} \int
_{\C^m} |z^{\alpha}|^2  e^{- N \phi(z)} \,\om_{\phi}^m(z).
\end{equation}

The inner product induces a Gaussian measure $\gamma_h$ on any
subspace $\scal \subset  \poly(N \Sigma)$. Again assuming that the
monomials are orthogonal, the basis (\ref{phialpha}) is $\langle,
\rangle_h$ orthonormal and we may write any polynomial in the form
$$P_N = \sum_{\alpha \in N \Sigma} c_{\alpha} \phi_{\alpha}. $$
The associated Gaussian measure is defined by the condition that
the coefficients of this orthonormal expansion are independent
complex normal random variables.

The \szego kernel (or weighted Bergman kernel) for the line bundle $\ocal(N)$ with metric $h^N=e^{-N\phi}$  is given over $\C^m$ by
\begin{equation}\Pi_{N,Q}(z,w) = e^{-N\phi}\sum_{\alpha \in N \Sigma}\phi_{\alpha}^N(z)  \overline{\phi_{\alpha}^N(w)}\;.\end{equation}  
It is the kernel for the orthogonal projection from $L^2(X)\to H^0(\CP^m,\ocal(N))$, where $X\to\CP^m$ is the unit circle bundle in $(L^*,h^*)$ with fibers $X_z=  \{e^{-\phi+i\theta}(1,z_0,\dots,z_m):\theta\in\R\}$ over points $z\in\C^m$; see \cite{SZ}.
For spectra $S\subset\Z^m\cap N\Sigma$, then the  kernel for the orthogonal projection $L^2(X)\to \poly(S)$ is the {\it conditional weighted Bergman kernel\/} given by
\begin{equation}\label{szego}\Pi_{N,Q|S}(z,w ) = e^{-N\phi}\sum_{\alpha \in S}
\phi_{\alpha}^N(z)  \overline{\phi_{\alpha}^N(w)}\;.\end{equation}

\subsection{The $\SU(m+1)$-ensembles}

 This is the Gaussian ensemble defined by the inner product
 arising from the
 Fubini-Study metric $\phi = \log (1 + |z|^2)$. Then  $\om_\FS=\frac
i{2\pi}\ddbar \log (1+\|z\|^2)$ is the Fubini-Study \kahler form
on $\C^m\subset\CP^m$ and
\begin{equation}\label{IPa}\langle f, \bar g \rangle =   \frac{1}{m!} \int
_{\C^m}\frac{f(z)\overline{g(z)}}{(1+\|z\|^2)^N}
\,\om_\FS^m(z),\quad f,g\in  \poly(N \Sigma),.\end{equation}

 The norming constants for the
inner product (\ref{IP}) are:
\begin{equation}\label{IP2}\|\chi_\al\| =\sqrt{ \langle \chi_\al, \chi_\al\rangle} =
\left[\frac{N!}{(N+m)!{N\choose\al}}\right]^\half\;,\quad\quad
{N\choose\al}:= \frac{N!}{(N-|\al|)!\alpha_1!\cdots \alpha_m!}\;.
\end{equation}
Thus we have an orthonormal basis for $\poly(N \Sigma)$ given by
the monomials
\begin{equation}\label{normchi}m^N_\al:= \frac{1}{\|\chi_\al\|}\,\chi_\al=
\sqrt{\frac{(N+m)!}{N!} {N\choose \al}}\ \chi_\al\ ,\qquad
|\al|\le N\;.\end{equation}

In this case, the circle bundle $X$ is the unit sphere $S^{2m+1}\subset \C^{m+1}$. We now regard the sections of $H^0(\CP^m,\ocal(N))$ as homogeneous polynomials restricted to $X=S^{2m+1}$.  By identifying the point $z\in(\C^*)^{m}$ with
the lift $x=\frac 1{(1+\|z\|^2)^{1/2}}(1,z_1,\dots,z_m)\in
S^{2m+1}$,  we may write the homogenized monomials on $S^{2m +
1}$ in affine coordinates $(z_1, \dots, z_m)$ as
\begin{equation}\label{mchi}\wh
\chi^N_\al(z)= \frac{z^\al}{(1+\|z\|^2)^{N/2}}.\end{equation} The
corresponding $L^2$ normalized monomials are then:
\begin{equation}\label{normchi1} \wh m^N_{\alpha}(z): =
\sqrt{\frac{(N+m)!}{N!} {N\choose \al}}\
\frac{z^\al}{(1+\|z\|^2)^{N/2}}\ ,\qquad |\al|\le N\;.
\end{equation}  

In short,  the $\SU(m + 1)$ ensemble of random polynomials of degree $N$
consists of polynomials of the form
\begin{equation}\label{normchi2} \sum_{|\alpha| \leq N} c_{\alpha}
\sqrt{\frac{(N+m)!}{N!} {N\choose \al}}\
\frac{z^\al}{(1+\|z\|^2)^{N/2}}\ ,
\end{equation}
where $c_{\alpha}$ are independent complex normal variables of mean zero and
variance one.

Specializing \eqref{szego} to the Fubini-Study metric, we have the following definition (where we omit the
subscript $Q$ indicating the norming constants):

\begin{defin}  Let $S\subset \Z^m\cap N\Sigma$. The  conditional Fubini-Study \szego kernel $\Pi_{N |S}$ is the kernel for the orthogonal
projection to $\poly(S)$ with respect to the induced Fubini-Study
inner product:
\begin{equation}\label{Sz2} \Pi_{N|S}(x,y) = \sum_{\alpha \in S}
\frac{1}{\|\chi_\al\|^2} \wh\chi^N_\al(x) \overline{\wh
\chi^N_{\alpha}(y)}= \frac{(N+m)!}{N!} \sum_{\al\in P} \wh m
^N_\al(x) \overline{\wh m^N_{\alpha}(y)}
\;.\end{equation}\end{defin}
  The
conditional \szego kernel can be written explicitly on $\C^m$ as
\begin{equation}\label{Sz2e} \Pi_{N|S}(z,w) =\frac{(N+m)!}{N!} \frac{\sum_{\alpha
\in S}{N\choose\al}z^\al \bar w^\al}{(1+\|z\|^2)^{N/2}
(1+\|w\|^2)^{N/2}} \;.\end{equation} It is the two-point function
for the conditional Gaussian ensemble $\poly(S)\subset\poly(N)$.

The full Fubini-Study  \szego kernel is given by
\begin{eqnarray}\Pi_{N}(z,w) &=& \frac{(N+m)!}{N!}
\sum_{|\alpha|\le N}\wh \chi^N_\al(z) \overline{\wh \chi^N_\al(w)}
\ =\ \frac{(N+m)!}{N!} \frac{\sum_{|\alpha|\le N
}{N\choose\al}z^\al \bar w^\al}{(1+\|z\|^2)^{N/2}
(1+\|w\|^2)^{N/2}} \label{Sz} \\&=& \frac{(N+m)!}{N!}
\left[\frac{1+\langle z,\bar w\rangle} {(1+\|z\|^2)^{1/2}
(1+\|w\|^2)^{1/2}}\right]^N\;.\label{Sz1}
\end{eqnarray}

\section{Fewnomial Ensembles}\label{FEWENSEMBLES}

\subsection{Precise definitions of random fewnomials}\label{ENSEMBLES}

We now define more precisely the  ensembles which allow for any
fewnomial system.  We fix the degree $N$, and first consider the
case of one random fewnomial. We specify a set of lattice points
by its characteristic function
\begin{equation} \sigma: N \Sigma \cap \N^m \to \{0, 1\},
\end{equation}
which may be regarded as an  occupation number, designating
whether a lattice point is occupied ($\sigma(\alpha) = 1)$ or
unoccupied ($\sigma(\alpha) = 0$. We denote by $|\sigma| =
\sum_{\alpha \in N \Sigma} \sigma(\alpha)$ the number of elements
in the set, and by $\supp \sigma = \{\alpha: \sigma(\alpha) = 1\}$
the support of $\sigma.$  We put:
\begin{equation} {\mathcal C}_{N, f} = \{ \sigma: N \Sigma \cap \N^m \to
\{0, 1\}\;\; \mbox{such that}\;\; |\sigma| = f \}, \end{equation}
and we denote the number of such subsets by
\begin{equation} \label{CNf}C(N, f) = |{\mathcal C}_{N, f}| = {{N+m\choose m}\choose f} = \frac 1{(m!)^f\,f!}\,N^{mf} +O(N^{mf-1}). \end{equation}
A polynomial with (at most) $f$ non-zero terms can then be written
in the form:
\begin{equation} P_{\sigma, c}(z) = \sum_{\alpha \in N \Sigma}
\sigma(\alpha)\, c_{\alpha}\, z^{\alpha},\quad |\sigma|=f\,. \end{equation}
 Thus the space of
random fewnomials is given by:
\begin{equation} {\mathcal F}_{N, f}  = \{(\sigma, P)
\in {\mathcal C}_{N, f} \times
\poly(N): P \in \poly(\supp \sigma) \}.
\end{equation}
There is a natural projection $\pi: {\mathcal F}_{N, f} \to
{\mathcal C}_{N, f}$ taking $(\sigma, P) \to \sigma$ and the
`fiber' of this projection is $\poly(\supp \sigma).$
 The
set of fewnomial systems of $m$ polynomials in $m$ variables with
fewnomial numbers $(f_1, \dots, f_m)$ is then given by
\begin{equation} {\mathcal F}_{N, (f_1, \dots, f_m)}:= {\mathcal F}_{N, f_1} \times \cdots \times
{\mathcal F}_{N, f_m}. \end{equation}

It is also natural to consider fewnomials with spectra contained
in a given Newton polytope. We therefore fix a convex lattice
polytope $\Delta \subset p \Sigma$ (for some $p$) and replace
$\Sigma$ everywhere by $\Delta.$ Thus, we  define
\begin{equation} {\mathcal C}_{N, f, \Delta} = \{ \sigma: N \Delta \cap \N^m \to
\{0, 1\}\;\; \mbox{such that}\;\; |\sigma| = f \},
\end{equation}
and
\begin{equation} {\mathcal F}_{N, f, \Delta} \subset {\mathcal C}_{N, f, \Delta} \times
\poly(N) = \{(\sigma, P): \supp(\sigma) \subset N \Delta,\; P \in
\poly(\supp \sigma) \}..
\end{equation}
Similarly, we define ${\mathcal F}_{N, (f_1, \dots, f_m),
\Delta_1, \dots, \Delta_m}$ for systems.

We now induce probability measures on ${\mathcal F}_{N, f}$ and
${\mathcal F}_{N, f, \Delta},$ by regarding them as `fibering'
over ${\mathcal C}(N, f)$, by putting counting measure on
${\mathcal C}_{N, f}$ and by putting the conditional measures $d
\gamma_{N| supp \sigma}$ on the `fibers'.

\begin{defin}\label{SUFDEF} The ensemble of random $\SU(m + 1)$ fewnomials of
degree $N$ and fewnomial number $f$ is the space ${\mathcal F}_{N,
f}$ endowed with the probability measure $d \mu_{N, f}$ defined by
$$ \int_{{\mathcal F}_{N, f}} g(S, P)\, d\mu_{N, f}(S,P) := \frac{1}{C(N,
f)} \sum_{S \in {\mathcal C}(N, f)} \int_{\poly(S)} g(S, p)\,
d\gamma_{N| S}(P).
$$
\end{defin}

In other words, $d\mu_{N, f}$ is defined by putting counting
measure on ${\mathcal C}_{N, f}$ and by putting the conditional
measures $d \gamma_{N| S}$ (given by \eqref{gammaNS} with $\phi_\al=\wh m_\al^N$) on the `fibers' of $\pi$.

We then put the product measures $$d\mu_{N, f_1, \dots, f_k} =
d\mu_{N, f_1} \times \cdots \times d\mu_{N, f_k}$$ on the space
${\mathcal F}_{N, (f_1, \dots, f_m)}$ of systems.

We define the measure $d \mu_{N, f, \Delta}$  on ${\mathcal F}_{N,
f, \Delta}$ and on the associated systems analogously. Similarly we
define the measures $d\mu_{N, f}^{\phi,\nu}$ and  $d\mu_{N, f}^{KH}$ for the general toric and Kac-Hammersley ensembles, respectively.

\subsection{Expected zeros of fewnomial ensembles}

We recall the {\it probabilistic Poincar\'e-Lelong formula\/} (see
for example,\cite{SZ,SZ3}):

\begin{prop}\label{review} Let $(L,h)$ be a Hermitian line bundle on a
compact \kahler manifold $M$. Let $\scal$ be a 
subspace of $ H^0(M,L)$ endowed with a Hermitian inner
product and we let $\ga$ be the induced  Gaussian probability measure
on $\scal$. Then the expected zero current of a random section
$s\in\scal$ is given by
\begin{eqnarray*}\E_\ga(Z_s)  &=&\frac{\sqrt{-1}}{2\pi}
\partial
\bar{\partial} \log \Pi_{\scal}(z, z)+c_1(L,h)\;.\end{eqnarray*}

If $\scal_j$ is a base-point-free linear system with Gaussian probability measure $\ga_j$, for $1\le j\le k$ (where $1\le k\le m$), then the  expected value of the simultaneous zero current of  $k$ independent
random sections $s_1\in\scal_1,\dots,s_k\in\scal_k$  is given by
$$\E_{\ga_1,\dots,\ga_k}\big(Z_{s_1,\dots,s_k}\big) = \bigwedge_{j=1}^k\left(\frac{\sqrt{-1}}{2\pi}
\partial
\bar{\partial} \log \Pi_{\scal_j}(z, z)+c_1(L,h)\right)\;,$$
which is a smooth form.\end{prop}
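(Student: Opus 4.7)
\emph{Plan.} The proposition is the standard probabilistic Poincar\'e--Lelong formula, and I would establish it in two stages: first the single-section formula by a direct Gaussian computation, then the multi-section formula by induction, conditioning, and Fubini.

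\medskip

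\emph{Step 1: the case $k=1$.} Fix a Hermitian orthonormal basis $S_1,\dots,S_d$ of $\scal$ and write the random section as $s = \sum_{j=1}^d c_j S_j$ with $c_j$ i.i.d.\ standard complex Gaussian. In a local holomorphic frame $e_L$ of $L$ with $\|e_L\|_h^2 = e^{-\phi}$, set $S_j = f_j\, e_L$ and $u(z) := (f_1(z),\dots,f_d(z))$, so that $\Pi_{\scal}(z,z) = \|u(z)\|^2 e^{-\phi(z)}$. Let $\hat u(z) := u(z)/\|u(z)\|$; then for each fixed $z$ the variable $\xi(z) := \langle c,\hat u(z)\rangle$ is a standard complex Gaussian, and
$$\log |s(z)|_h^2 \;=\; \log \|u(z)\|^2 - \phi(z) + \log|\xi(z)|^2 \;=\; \log \Pi_{\scal}(z,z) + \log|\xi(z)|^2.$$
Since $\E\log|\xi(z)|^2$ is a $z$-independent constant, it is annihilated by $\ddbar$. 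Combining this with the classical Poincar\'e--Lelong identity $[Z_s] = \tfrac{\sqrt{-1}}{2\pi}\ddbar \log |s|_h^2 + c_1(L,h)$ and exchanging $\E$ with $\ddbar$ (valid on currents, since $\log|s|_h^2$ is uniformly locally integrable in the Gaussian parameter) yields the first formula.

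\medskip

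\emph{Step 2: systems.} I would induct on $k$. The base-point-free hypothesis forces $\Pi_{\scal_j}(z,z)>0$ everywhere on $M$, so each $\E_{\ga_j}[Z_{s_j}]$ is a smooth closed $(1,1)$-form. For the inductive step I would condition on $(s_2,\dots,s_k)$: by a generic-transversality (Bertini-type) argument, for almost every such tuple the current $Z_{s_2,\dots,s_k}$ is integration along a pure-codimension-$(k-1)$ analytic subset $Y$ on which $s_1$ does not vanish identically. On $Y$ one may wedge the Poincar\'e--Lelong identity for $s_1$ against $Z_{s_2,\dots,s_k}$ to obtain
$$Z_{s_1,\dots,s_k} \;=\; \Bigl(\tfrac{\sqrt{-1}}{2\pi}\ddbar\log|s_1|_h^2 + c_1(L,h)\Bigr) \wedge Z_{s_2,\dots,s_k}\,.$$
Taking the expectation in $s_1$ and applying Step 1 pointwise on the smooth locus of $Y$ yields $\E_{\ga_1}[Z_{s_1}]\wedge Z_{s_2,\dots,s_k}$; integrating over $(s_2,\dots,s_k)$ and using the induction hypothesis together with the smoothness of $\E_{\ga_1}[Z_{s_1}]$ (which makes the wedge against the remaining expected current unambiguous) produces $\bigwedge_{j=1}^k \E_{\ga_j}[Z_{s_j}]$.

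\medskip

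\emph{Main obstacle.} Step 1 is a routine Gaussian computation. The technical heart is Step 2: one must check that the conditional intersection $Z_{s_1,\dots,s_k}$ is a well-defined current for almost every realization and that the iterated Fubini interchange (inner expectation in $s_1$, outer in the remaining variables) is legitimate as an identity of currents, i.e.\ after pairing with an arbitrary test form $\psi \in \dcal^{m-k,m-k}(M)$. The base-point-free hypothesis governs the first issue through Bertini, while the smoothness of each $\E_{\ga_j}[Z_{s_j}]$ gives the uniform local integrability required to apply dominated convergence in the second. Once these are pinned down, smoothness of the product in the conclusion is automatic.
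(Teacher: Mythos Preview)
The paper does not supply its own proof of this proposition; it is quoted as a known result with references to \cite{SZ,SZ3}.  Your two-step outline is correct and is precisely the standard argument found in those references: Step~1 is the Gaussian computation of \cite{SZ}, and the inductive conditioning/Fubini argument of Step~2 is the one carried out in \cite{SZ3}.  One minor point worth making explicit in Step~1 is that the decomposition $\log|s|_h^2=\log\Pi_{\scal}+\log|\xi|^2$ is only valid off the base locus of $\scal$, which is not assumed empty in the first assertion; the identity of currents extends across the base locus since both $\E\log|s|_h^2$ and $\log\Pi_{\scal}$ are locally integrable (indeed quasi-plurisubharmonic) and agree almost everywhere.
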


Applying Proposition \ref{review} to a fewnomial system $\scal=\poly(S)$, we have
\begin{prop}\label{review1} Let $S_1,\dots,S_k$ be finite subsets of $ N\Sigma \cap \Z^m$. Then
the expected zero current in $(\C^*)^m$ of $k$ random fewnomials
$P_1\in\poly(S_1),\dots,P_k\in S_k$ is given by the smooth form
$${\bf E}_{N|S_1,\dots,S_k} Z_{P_1,\dots,P_k} =  \bigwedge_{j=1}^k\left(\frac{\sqrt{-1}}{2\pi}
\partial
\bar{\partial} \log \Pi_{S_j}(z, z)+\frac N\pi\om_\FS\right)\;.$$

 \end{prop}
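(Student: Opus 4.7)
The plan is to apply Proposition \ref{review} directly to the subspaces $\scal_j = \poly(S_j) \subset H^0(\CP^m, \ocal(N))$, equipped with the restriction of the Fubini-Study inner product \eqref{IPa}. The two identifications to verify are: (i) the conditional Gaussian measure $\gamma_{N|S_j}$ on $\poly(S_j)$ is the Gaussian measure induced on this subspace by the restricted inner product, and (ii) the conditional \szego kernel $\Pi_{N|S_j}$ defined in \eqref{Sz2} is exactly the subspace \szego kernel $\Pi_{\scal_j}$ appearing in Proposition \ref{review}. Both are immediate from the definitions: the family $\{\wh m^N_\alpha : \alpha \in S_j\}$ is an orthonormal basis of $\poly(S_j)$ sitting inside the orthonormal basis $\{\wh m^N_\alpha : |\alpha|\le N\}$ of $\poly(N\Sigma)$, and \eqref{gammaNS} is precisely the standard Gaussian built from this basis.

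Next I would compute the Chern form contribution. The metric $h^N$ on $\ocal(N)$ has local weight $N\phi = N\log(1+\|z\|^2)$ in the frame $e_0^{\otimes N}$, so
$$c_1(\ocal(N), h^N) = \frac{\sqrt{-1}}{2\pi}\ddbar(N\phi) = N\om_\FS,$$
which accounts (up to the $1/\pi$ normalization) for the second term in the displayed formula. I would then check base-point-freeness of $\poly(S_j)$ on $(\C^*)^m$: any single monomial $\chi_\alpha = z^\alpha$ with $\alpha\in S_j$ is nowhere vanishing on $(\C^*)^m$, so $\poly(S_j)$ is base-point-free there for every nonempty $S_j$. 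This explains why the statement restricts to $(\C^*)^m$, since globally on $\CP^m$ a spectrum missing the vertices of $N\Sigma$ would force common zeros along coordinate hyperplanes.

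With these pieces in place, the single-fewnomial case is immediate from the first formula of Proposition \ref{review}, giving the smooth expected zero current $\frac{\sqrt{-1}}{2\pi}\ddbar\log\Pi_{N|S_j}(z,z) + c_1(\ocal(N), h^N)$ on $(\C^*)^m$ for each $P_j$, and the system formula follows from the wedge-product statement in Proposition \ref{review} applied to the independent random fewnomials $P_1\in\poly(S_1), \dots, P_k\in\poly(S_k)$. There is no essential obstacle: the proposition is a specialization of the probabilistic Poincar\'e-Lelong formula to the conditional Gaussian on a coordinate subspace of $\poly(N\Sigma)$, and the only real content is the bookkeeping outlined above—matching the conditional kernel with the subspace Bergman kernel and verifying base-point-freeness on the correct open set.
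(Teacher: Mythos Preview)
Your proposal is correct and follows essentially the same approach as the paper: verify that each $\poly(S_j)$ is base-point-free on $(\C^*)^m$ (since monomials vanish only on coordinate hyperplanes) and then apply Proposition~\ref{review}. The paper's own proof is simply a terse version of what you wrote, omitting the routine bookkeeping you spelled out (identification of the conditional Gaussian and kernel, and the computation of the Chern form).
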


\begin{proof} We recall that the  base point
locus of a suspace ${\mathcal S} \subset \poly(N)$ is the set of
points at which $p(z) = 0, \forall p \in {\mathcal S}$.Since a monomial
$z_1^{\alpha_1} \cdots z_m^{\alpha_m}$ vanishes if and only if
$z_j = 0$ for some $j$ such that $\alpha_j > 0$, the base locus of
$\poly(S)$ is always contained in the coordinate hyperplances $\bigcup_{j = 1}^m \{z_j =
0\}.$  Applying Proposition \ref{review} to $(\C^*)^m\subset \CP^m$, we obtain the result.
\end{proof}

\begin{cor} \label{review2}
The expected zero current in $(\C^*)^m$ of a system of $k$ random fewnomials of degree $\le N$ with fewnomial number f is given by
$${\bf E}_{N,f} Z_{P_1,\dots,P_k} =\left[ \frac{1}{C(N, f)} \sum_{\sigma \in {\mathcal C}_{N,f}} \left(\frac{\sqrt{-1}}{2\pi}
\partial
\bar{\partial} \log \Pi_{N|\supp\sigma}(z, z)+\frac N\pi\om_\FS\right)\right]^k\,.$$ where $C(N,f)$ is given by \eqref{CNf}.
 \end{cor}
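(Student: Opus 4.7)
The plan is to derive the formula directly from Proposition \ref{review1} by unpacking the definition of the ensemble measure $d\mu_{N,f}^{\times k}$ and exploiting the product structure. First I would apply Definition \ref{SUFDEF} together with the definition of the product measure on the space ${\mathcal F}_{N,(f,\dots,f)}$: integration against $d\mu_{N,f}^{\times k}$ amounts to a sum over $(S_1,\dots,S_k) \in {\mathcal C}_{N,f}^{\,k}$ with uniform weight $1/C(N,f)^k$, followed by integration with respect to the independent conditional Gaussian measures $d\gamma_{N|S_1}\cdots d\gamma_{N|S_k}$. Applied to the random current $Z_{P_1,\dots,P_k}$, this gives
$$ {\bf E}_{N,f}\, Z_{P_1,\dots,P_k} \;=\; \frac{1}{C(N,f)^k}\sum_{(S_1,\dots,S_k)\in {\mathcal C}_{N,f}^{\,k}} {\bf E}_{N|S_1,\dots,S_k}\, Z_{P_1,\dots,P_k}. $$

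Next I would substitute the formula from Proposition \ref{review1}. For fixed spectra $S_1,\dots,S_k$, that proposition yields
$$ {\bf E}_{N|S_1,\dots,S_k}\, Z_{P_1,\dots,P_k} \;=\; \bigwedge_{j=1}^k\left(\frac{\sqrt{-1}}{2\pi}\partial\bar\partial \log \Pi_{N|S_j}(z,z) + \frac{N}{\pi}\om_\FS\right), $$
and this is a smooth $(k,k)$-form, so there are no distributional issues when we wedge and add. Since the $j$-th factor depends only on $S_j$, the multi-index sum factors as an iterated wedge of single-index sums, i.e.
$$ \sum_{(S_1,\dots,S_k)} \bigwedge_{j=1}^k A(S_j) \;=\; \bigwedge_{j=1}^k \sum_{S_j\in {\mathcal C}_{N,f}} A(S_j) \;=\; \Big(\sum_{\sigma\in{\mathcal C}_{N,f}} A(\sigma)\Big)^k, $$
where $A(\sigma) = \frac{\sqrt{-1}}{2\pi}\partial\bar\partial\log \Pi_{N|\supp\sigma}(z,z)+\frac{N}{\pi}\om_\FS$ and the final power denotes the $k$-fold wedge product. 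Dividing by $C(N,f)^k$ yields exactly the claimed expression.

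There is no real obstacle to this argument; it is an algebraic repackaging of Proposition \ref{review1} together with the definition of the measure. The only point that merits a line of justification is that the wedge-sum exchange above is legitimate precisely because each fixed-spectrum expected current is a \emph{smooth} form on $(\C^*)^m$ (Proposition \ref{review1}), so ordinary multilinear algebra of forms applies and there is no need to invoke more delicate pluripotential-theoretic results about wedge products of currents. The conclusion follows immediately.
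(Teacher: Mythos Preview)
Your argument is correct and is exactly the immediate derivation the paper has in mind: the corollary is stated without proof right after Proposition~\ref{review1}, and your unpacking of the product measure followed by the wedge-sum factorization (legitimate because each factor is a smooth form on $(\C^*)^m$) is the intended one-line justification.
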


\subsection{Mass asymptotics and fewnomial \szego kernels}\label{MASS}

We now give the asymptotics of the \szego kernels $\Pi_{N|S}$ We need joint asymptotics in $N$ and $S$ (leaving the fewnomial number $f=|S|$ fixed). We begin with the dilated fixed spectra system (I).

A special case of Theorem 4.1 in \cite{SZ} on the mass asymptotics for polynomials with spectra in dilates of a Newton polytope $P$ is where  $P=\{\be\}$ is a single lattice point in
$p\Si$. In this case
\begin{equation}\label{Pidecay} \Pi_{Np|N\{\be\}}(z,z)= |\wh m^{Np}_{N\be}|^2= N^{\frac m2}
e^{-Nb_\be^p(z)}[c_0+c_1 N\inv +c_2 N^{-2}+ \cdots]\;,\end{equation}
where
\begin{equation}\label{bpoint} b_\be^p(z)= \sum_{j=0}^m\be_j\log \frac{\be_j}{p}
-\log\frac{|z^\be|^2}{(1+\|z\|^2)^p}\qquad
(\be_0=p-|\be|)\;.\end{equation}  In \eqref{bpoint}, we can let $\be$
be any point in the interior of $p\Sigma$.  We also write $b_x=b_x^1$, for arbitrary (not necessarily integral) $x\in\Si$:
\begin{equation}\label{bx}  b_x(z)= \sum_{j=0}^mx_j\log x_j - \sum_{j=1}^m x_j\log |z_j|^2+\log{(1+\|z\|^2)}\qquad
(x_0=1-|x|)\;.\end{equation}

The first term is the symplectic potential for the Fubini-Study
metric, i.e. the Legendre transform of the open orbit \kahler
potential (see \S \ref{TORIC})  We now give a precise estimate for
the joint asymptotics of \eqref{Pidecay} using Stirling's formula.
A similar analysis was done in dimension one in \cite{SoZ1} and in
Lemma 6.2 of \cite{SoZ3}. The kernel (\ref{Pidecay}) is denoted
$\pcal_{h^N}(\alpha, z)$ in \cite{SoZ} and is analyzed for general
toric varieties in Section 6 of that article.  Since it is
elementary we give a self-contained proof in the case of $\SU(m + 1)$
polynomials (i.e., for the Fubini-Study metric).

\begin{lem}\label{ptwise} There exists positive constants $C_m$ depending only on $m$ such that for all $\al\in (N\Si)^\circ\cap\Z^m$, we have
$$ \log |\wh m^{N}_{\al}|^2=  -N\,b_{\al/N}+\frac m2 \log N -\half\sum_{j=0}^m\log\left(\frac{\al_j}N\right)+R(\al,N,m)\,,$$
where $|R(\al,N,m)| \le C_m$.

\end{lem}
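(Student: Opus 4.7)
My plan is to start from the explicit formula \eqref{normchi1} for the $L^2$-normalized monomials, which gives
\[
\log |\wh m^N_\al|^2 \;=\; \log\frac{(N+m)!}{N!} \;+\; \log \binom{N}{\al} \;+\; 2\sum_{j=1}^m \al_j \log|z_j| \;-\; N\log(1+\|z\|^2),
\]
and then to expand each piece by Stirling. The first piece, $\log\frac{(N+m)!}{N!} = \sum_{j=1}^m \log(N+j) = m\log N + O(1/N)$, is trivial; the work is in the multinomial $\binom{N}{\al} = N!/\prod_{j=0}^m \al_j!$ with $\al_0=N-|\al|$.

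Next I would apply Stirling's formula in the sharp form $\log n! = n\log n - n + \tfrac12\log(2\pi n) + \eta(n)$ with $|\eta(n)|\le 1/(12n)$ for $n\ge 1$. Since $\al\in(N\Si)^\circ\cap\Z^m$ forces $\al_j\ge 1$ for $j=0,1,\dots,m$, the error terms $\eta(\al_j)$ are each bounded by $1/12$, so together they contribute a quantity bounded by a constant depending only on $m$. Using $\sum_{j=0}^m \al_j = N$ to kill the linear terms, this yields
\[
\log\binom{N}{\al} \;=\; N\log N - \sum_{j=0}^m \al_j \log \al_j \;+\; \tfrac12 \log N - \tfrac12\sum_{j=0}^m \log \al_j \;-\; \tfrac{m}{2}\log(2\pi) \;+\; O_m(1).
\]
Writing $\al_j\log \al_j = \al_j\log N + \al_j\log(\al_j/N)$ and summing gives $N\log N - \sum_j \al_j\log\al_j = -N\sum_{j=0}^m (\al_j/N)\log(\al_j/N)$, which combined with the geometric term $2\sum_{j=1}^m \al_j\log|z_j| - N\log(1+\|z\|^2)$ is exactly $-N\,b_{\al/N}(z)$ by the definition \eqref{bx}.

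Finally, to match the form of the remainder stated in the lemma, I would convert $-\tfrac12\sum_{j=0}^m \log \al_j$ into $-\tfrac12\sum_{j=0}^m \log(\al_j/N) - \tfrac{m+1}{2}\log N$. Collecting all $\log N$ contributions then gives the coefficient $m + \tfrac12 - \tfrac{m+1}{2} = \tfrac{m}{2}$, and the leftover constant $-\tfrac{m}{2}\log(2\pi)$ together with the Stirling remainders $O_m(1)$ and the $O(1/N)$ from $\log\frac{(N+m)!}{N!}$ is absorbed into $R(\al,N,m)$, which is therefore bounded by a constant $C_m$ depending only on $m$.

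The calculation is entirely routine; the only point that requires care is the uniformity of the remainder. The potentially dangerous errors are the Stirling tails $\eta(\al_j)$, but these are harmless precisely because the hypothesis $\al\in (N\Si)^\circ$ guarantees $\al_j\ge 1$ for \emph{every} $j\in\{0,\dots,m\}$ (including the ``$0$-th coordinate'' $\al_0 = N-|\al|$); otherwise one of the $\log(\al_j/N)$ terms on the right-hand side would itself be unbounded. No compactness or geometric input beyond the explicit monomial formula is needed.
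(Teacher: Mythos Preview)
Your argument is correct and is essentially identical to the paper's own proof: both start from \eqref{normchi1}, apply Stirling's formula with the explicit remainder $|\eta(n)|\le 1/(12n)$ to $N!$ and each $\al_j!$ (using $\al_j\ge 1$ from $\al\in(N\Si)^\circ$ to bound the tails uniformly in $m$), and then regroup terms to identify $-N b_{\al/N}$ and the $\tfrac m2\log N$ coefficient. The only cosmetic difference is that the paper treats $\log\frac{(N+m)!}{N!}\binom{N}{\al}$ in one stroke and records the explicit bound $|R|\le \sum_{j=1}^m\log(1+j)+\tfrac m2\log(2\pi)+\tfrac{m+1}{12}$, whereas you split off $\log\frac{(N+m)!}{N!}=m\log N+O_m(1/N)$ first.
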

\begin{proof} Let $x=\al/N$.  Recalling \eqref{normchi1}, it suffices to show that
$$\log\left[\frac{(N+m)!}{N!} {N\choose \al}\right] = -\sum_{j=0}^m \left(Nx_j+\frac 12\right)\log x_j+\frac m2 \log N +R(\al,N,m)\qquad (\al_0 = N-|\al|)\,.$$
Using Stirling's formula
\begin{equation}\label{Stirling} n! = \sqrt{2\pi} \,n^{n+ 1/2}\, e^{-n+\ep_n}\,, \quad \mbox{where }\ \frac 1{12n+1} <\ep_n< \frac 1{12n}\;,\end{equation}
we obtain \begin{eqnarray*}\log\left[\frac{(N+m)!}{N!} {N\choose \al}\right]\hspace{-1.3in} \\&=&\sum _{j=1}^m\log(N+j) - \frac m2 \log (2\pi) +(N+\half)\log N-\sum_{j=0}^m(\al_j+\half)\log\al_j +\ep_N - \sum_{j=0}^m\ep_{\al_j}\\&=&\frac m2 \log N-\sum_{j=0}^m \left(Nx_j+\frac 12\right)\log x_j +R\,,\end{eqnarray*} where $$R= \sum_{j=1}^m \log (1+j/N)-\frac m2\log(2\pi) +\ep_N- \sum\ep_{\al_j}.$$ Thus
$$|R|\le \sum_{j=1}^m \log (1+j) +\frac m2\log(2\pi) +\frac{m+1}{12}\,.$$
\end{proof}

\begin{lem}\label{ptwisebound} There exists positive constants $C'_m$ such that
$$ -N\,b_{\al/N}+\frac m2 \log N -C'_m\le \log |\wh m^{N}_{\al}|^2\le  -N\,b_{\al/N}+m\log N +C'_m\,,$$ for all $\al\in (N\Si)\cap\Z^m$.\end{lem}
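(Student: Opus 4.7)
The plan is to adapt the argument of Lemma \ref{ptwise}, which only applied to $\al$ in the interior $(N\Si)^\circ$. For $\al$ on the boundary, some of the coordinates $\al_0=N-|\al|,\al_1,\dots,\al_m$ vanish, and Stirling's formula \eqref{Stirling} cannot be applied to those. Let $S=\{j\in\{0,\dots,m\}:\al_j>0\}$ and $s=|S|$, so $1\le s\le m+1$, with the case $s=m+1$ already covered by Lemma \ref{ptwise}.

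First I would rewrite the multinomial as $\binom{N}{\al}=N!/\prod_{j\in S}\al_j!$ and apply Stirling only to the $s$ genuine factorials along with $N!$, repeating the calculation of Lemma \ref{ptwise}. Using $\sum_{j=0}^m x_j=1$ (where $x_j=\al_j/N$) and the convention $0\log 0=0$ in the definition \eqref{bx} of $b_x$, this should yield the identity
$$\log|\wh m^N_\al|^2=-Nb_{\al/N}+\left(m+\tf12-\tf{s}{2}\right)\log N-\tf12\sum_{j\in S}\log(\al_j/N)+R(\al,N),$$
with $|R(\al,N)|\le C_m$, uniformly in $\al\in(N\Si)\cap\Z^m$. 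This specializes to Lemma \ref{ptwise} when $s=m+1$.

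Next I would estimate the sum $-\tf12\sum_{j\in S}\log x_j$. Since $0<x_j\le 1$ for $j\in S$, each term is $\ge 0$, giving the lower bound
$$\log|\wh m^N_\al|^2\ge -Nb_{\al/N}+\left(m+\tf12-\tf{s}{2}\right)\log N-C_m\ge -Nb_{\al/N}+\tf m2\log N-C_m,$$
since $m+\tf12-\tf{s}{2}\ge \tf m2$ when $s\le m+1$. For the upper bound, the constraint $\sum_{j\in S}x_j=1$ forces at least one index $j_*\in S$ with $x_{j_*}\ge 1/s\ge 1/(m+1)$, so $-\log x_{j_*}$ is bounded by a constant depending only on $m$. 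The remaining $s-1$ indices satisfy $x_j\ge 1/N$, hence $-\log x_j\le\log N$. Therefore
$$-\tf12\sum_{j\in S}\log x_j\le \tf{s-1}{2}\log N+C_m',$$
and combining with the coefficient $(m+\tf12-\tf{s}{2})\log N$ yields the desired $m\log N+C'_m$.

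The bookkeeping at the boundary is the main point to handle carefully, but there is no analytical obstacle: the key structural input is the linear constraint $\sum x_j=1$, which rules out the scenario in which every nonzero $x_j$ is as small as $1/N$. This is what keeps the coefficient of $\log N$ from exceeding $m$.
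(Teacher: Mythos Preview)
Your argument is correct and complete; the formula with coefficient $m+\tf12-\tf{s}{2}$ is exactly what a direct Stirling computation over the nonzero indices yields, and your pigeonhole observation that some $x_{j_*}\ge 1/s$ is what makes the total $\log N$ coefficient collapse to $m$ in the upper bound.

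The paper takes a different route. For interior $\al$ it invokes Lemma~\ref{ptwise} directly, bounding $-\tf12\sum_{j=0}^m\log(\al_j/N)$ from above via convexity (the maximum over the interior lattice points is attained at a vertex such as $(N-m,1,\dots,1)$, giving $\tf m2\log N+\tf12\log\frac{N}{N-m}$). For boundary $\al$, rather than redoing the Stirling computation, it performs a \emph{dimensional reduction}: after permuting homogeneous coordinates so that $\al=(\al_0,\dots,\al_k,0,\dots,0)$ with $\al_0,\dots,\al_k\ge 1$, it relates $\wh m^N_\al(z)$ to the $k$-variable monomial $\wh m^N_{\al'}(z')$ (with $\al'=(\al_1,\dots,\al_k)$, $z'=(z_1,\dots,z_k)$) via the identities
\[
\log|\wh m^N_\al(z)|^2=\log|\wh m^N_{\al'}(z')|^2+\log\frac{(N+m)!}{(N+k)!}+N\log\frac{1+\|z'\|^2}{1+\|z\|^2},\qquad
b_{\al'/N}(z')-b_{\al/N}(z)=\log\frac{1+\|z'\|^2}{1+\|z\|^2},
\]
and then applies the interior bounds already established in dimension $k$. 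Your approach has the virtue of treating interior and boundary uniformly in a single computation; the paper's has the virtue of reusing Lemma~\ref{ptwise} as a black box and making the reduction-to-lower-dimension structure explicit. Both give the same constants up to harmless $m$-dependent adjustments.
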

\begin{proof}  We first suppose that $\al\in I_N:=(N\Sigma)^\circ\cap\Z^m$. The lower bound is an immediate consequence of Lemma \ref{ptwise}. If $I_N\neq\emptyset$, then
 $N\ge m+1$ and the maximum value of the convex function $\alpha\mapsto-\sum_{j=0}^m\log\left(\frac{\al_j}N\right)$ on $I_N$  is attained on the vertices of $I_N$. Thus $$-\half\sum_{j=0}^m\log\left(\frac{\al_j}N\right)\le \frac m2 \log N +\half \log \frac N{N-m} \le  \frac m2 \log N +\half\log(m+1)\,,$$ and the upper bound follows from Lemma \ref{ptwise}.

Now suppose that $\al\in\d(N\Sigma)\cap\Z^m$. By a permutation of homogenous coordinates, we can assume without loss of generality that $\al=(\al_0,\dots,\al_k,0,\dots,0)$ where $\al_j\ge 1$ for $0\le j\le k$.  Let $\al'=(\al_1,\dots,\al_k)$, $z'=(z_1,\dots,z_k)$.  We note that
$$ b_{\al'/N}(z')-b_{\al/N}(z) =
\log(1+\|z'\|^2) - \log(1+\|z\|^2)\,.$$

By the lower bound proved above for the monomial $\wh m^N_{\al'}$ on $(\C^*)^k$, we have

\begin{eqnarray*}\log|\wh m^N_{\al}(z)|^2 &=&\log|\wh m^N_{\al'}(z')|^2 +\log\left[\frac{(N+m)!}{(N+k)!}\right] + N\log\left[\frac{1+\|z'\|^2} {1+\|z\|^2}\right]\\& \ge & - N\,b_{\al'/N}(z') +\frac k2 \log N - C'_k +(m-k)\log N + N\log\left[\frac{1+\|z'\|^2} {1+\|z\|^2}\right]\\
&=& - N\,b_{\al/N}(z) +\left(m-\frac k2\right)\log N -C_k',\end{eqnarray*} which yields the desired lower bound when $\al$ is in the boundary of $N\Sigma$.

On the other hand, by the upper bound for the monomial $\wh m^N_{\al'}$, we have
\begin{eqnarray*}\log|\wh m^N_{\al}(z)|^2 &=&\log|\wh m^N_{\al'}(z')|^2 +\log\left[\frac{(N+m)!}{(N+k)!}\right] + N\log\left[\frac{1+\|z'\|^2} {1+\|z\|^2}\right]\\&\le& - N\,b_{\al'/N}(z') +k\log N +C_k'+(m-k)\left(\log N +\frac mN\right) + N\log\left[\frac{1+\|z'\|^2} {1+\|z\|^2}\right]\\
&=& - N\,b_{\al/N}(z) +m\log N +C_k'+\frac{m^2-km}N\,,\end{eqnarray*} which gives the desired upper bound.
\end{proof}

\subsection{Proof of Theorems \ref{EZa} and \ref{EZb}} These theorems are consequences of the following convergence result:

\begin{lem}\label{uniform} Let $m,f,p$ be positive integers.  Then $$\frac 1N \log \Pi_{Np|NS}(z,z) \to -p\,\min_{1\le j\le f} \{b_{\la^j/p}(z)\}$$ uniformly for $z\in(\C^*)^m,\ S\in\ccal(p,f)$.
\end{lem}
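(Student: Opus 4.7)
The plan is to bound $\Pi_{Np|NS}(z,z)$ from above and below by its individual monomial terms, using the two-sided estimate of Lemma \ref{ptwisebound}, and observe that the errors are $O(\log N)$ uniformly in both $z$ and $S$.

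First I would write out the kernel as a finite sum of $f$ squared monomials: since $NS = \{N\lambda^1,\dots,N\lambda^f\}$, formula \eqref{Sz2} gives
$$\Pi_{Np|NS}(z,z) = \sum_{j=1}^f \bigl|\wh m^{Np}_{N\lambda^j}(z)\bigr|^2.$$
Note $N\lambda^j/(Np) = \lambda^j/p$, so Lemma \ref{ptwisebound} (applied with degree $Np$ and multi-index $N\lambda^j$) yields
$$-Np\,b_{\lambda^j/p}(z)+\tfrac m2\log(Np) - C_m' \;\le\; \log\bigl|\wh m^{Np}_{N\lambda^j}(z)\bigr|^2\;\le\; -Np\,b_{\lambda^j/p}(z)+m\log(Np) + C_m',$$
and crucially the constants $C_m'$ depend only on $m$, not on $z$, $N$, $p$, or $\lambda^j$.

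Write $b^*(z) := \min_{1\le j\le f} b_{\lambda^j/p}(z)$. For the upper bound, every term in the sum is bounded above by $e^{-Np\,b^*(z)}(Np)^m e^{C_m'}$, so
$$\Pi_{Np|NS}(z,z) \;\le\; f\,(Np)^m e^{C_m'}\, e^{-Np\,b^*(z)}.$$
For the lower bound, select $j_0$ attaining the minimum and drop all other terms:
$$\Pi_{Np|NS}(z,z) \;\ge\; \bigl|\wh m^{Np}_{N\lambda^{j_0}}(z)\bigr|^2\;\ge\; (Np)^{m/2} e^{-C_m'}\, e^{-Np\,b^*(z)}.$$
Taking logarithms, dividing by $N$, and letting $N\to\infty$ gives
$$\frac{1}{N}\log\Pi_{Np|NS}(z,z) \;=\; -p\,b^*(z) + O\!\left(\tfrac{\log N}{N}\right),$$
and the $O(\log N/N)$ term is independent of both $z\in(\C^*)^m$ and $S\in\ccal(p,f)$. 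This gives the claimed limit, with convergence uniform in $(z,S)$.

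I do not anticipate a real obstacle: the pointwise and uniform statements are both immediate from Lemma \ref{ptwisebound} once one notices that the error constants there are independent of $z$, and that the set $\ccal(p,f)$ of admissible spectra is finite (depending only on the fixed $p$ and $f$), so uniformity in $S$ is free. The only minor point to check is that the lower bound $b^*(z)\ge -\infty$ and the minimum is actually attained — but this is clear since $b_{\lambda^j/p}$ is a finite continuous function on $(\C^*)^m$ for each fixed $\lambda^j$ in the interior of $p\Sigma$, and the boundary case of $\lambda^j\in\d(p\Sigma)$ is already absorbed into the two-sided estimates of Lemma \ref{ptwisebound}.
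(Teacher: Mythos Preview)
Your proposal is correct and follows essentially the same approach as the paper: write $\Pi_{Np|NS}(z,z)$ as a sum of $f$ squared normalized monomials, sandwich its logarithm between $\max_j\log|\wh m^{Np}_{N\lambda^j}|^2$ and $\max_j\log|\wh m^{Np}_{N\lambda^j}|^2+\log f$, and invoke the uniform two-sided bounds of Lemma~\ref{ptwisebound}. Your additional remark that $\ccal(p,f)$ is finite is true but unnecessary, since the constants in Lemma~\ref{ptwisebound} are already independent of $\alpha$, giving uniformity in $S$ directly.
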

\begin{proof}
Let $S=\{\la^1,\dots,\la^f\}\subset p\Sigma$, and recall that
\begin{equation*} \Pi_{Np|NS}(z,z) = \sum_{j=1}^f |\wh m^{Np}_{N\la^j}(z)|^2 \,.
\end{equation*}  By Lemma \ref{ptwisebound}, we have
\begin{multline*}\max_j \{-Np\,b_{\la^j/p}\} -C_m \le \max_j \{\log |\wh m^{Np}_{N\la^j}|^2\} \le \log  \Pi_{Np|NS} \le \max_j \{\log |\wh m^{Np}_{N\la^j}|^2\} +\log f\\
\le  \max_j\{-Np\,b_{\la^j/N}\}+m\log (Np) +C'_m+\log f\,.\end{multline*}
Dividing by $N$, the conclusion follows.\end{proof}

\begin{rem}  Lemma \ref{uniform} is a special case of the generalization (with a stronger
 uniformity result) of Proposition 4.2 in \cite{SZ2} to nonconvex polytopes [unpublished]. In
 the case where $S$ is one point, an  analysis of the full (i.e. not just logarithmic)  asymptotics of $\pcal_{h^N}(\alpha, z)$
is given  in Section 6 of \cite{SoZ}.  \end{rem}

\medskip\noindent{\it Proof of Theorems \ref{EZa}--\ref{EZb}:\/}Let $S=\{\la^1,\dots,\la^f\}\subset p\Sigma$.
 By Proposition \ref{review1} with $k=m$,
\begin{eqnarray*}N^{-m}\E_{N|NS}Z_{p_1,\dots,p_m}
&=&\left\{\frac i{2\pi}\ddbar\left[\frac 1N \log \Pi_{Np|NS}(z,z)\right]+\frac p\pi\, \om_\FS\right\}^m
\\&=&\ma\left\{\left[\frac 1N \log \Pi_{Np|NS}(z,z)\right]+p\log(1+\|z\|^2)\right\}.\end{eqnarray*}
 By \eqref{bx} and Lemma \ref{uniform}, \begin{equation}\label{limI}\left[\frac 1N \log \Pi_{Np|NS}(z,z)
 \right]+p\log(1+\|z\|^2)\to p\,\max_{\la\in S}\left[\langle\rho, \lambda^p
\rangle -\langle
\wh\lambda^p, \log \wh\lambda^p \rangle  \right] +p\log p\end{equation} uniformly, where
 $\rho=(\log|z_1|^2,\dots,\log|z_m|^2)$. Theorem \ref{EZb} then follows from Proposition \ref{review1}
  and the Bedford-Taylor theorem \cite{BT,Kl} on the continuity of the operator $(u_1,\dots,u_k)\mapsto dd^cu_1\wedge\cdots\wedge dd^c u_k$ under uniform limits.
Theorem \ref{EZa} is a special case of Theorem \ref{EZb}.\qed

\medskip Corollary\ref{EZc} follows immediately from Theorem \ref{EZb} by averaging over the spectra in $\Delta$.

\section{Zeros of random fewnomial systems: Proof of Theorem \ref{main}}\label{proofmain}

By Corollary \ref{review2} and the Bedford-Taylor continuity theorem for $dd^cu_1\wedge\cdots\wedge dd^c u_k$ under uniform limits, to prove Theorem \ref{main}, it suffices to show that
\begin{multline}\label{sumlim}
\frac{1}{C(N, f)} \sum_{\sigma \in {\mathcal C}_{N,f}} \left(\frac 1N \log \Pi_{N|\supp\sigma}(z, z)+ \log(1+\|z\|^2)\right)\\ \to \int_{\Sigma^f}
 \max_{j = 1, \dots, f} \left[\langle \rho, \lambda^j
\rangle -\langle
\wh{\lambda^j\,}, \log \wh{\,\lambda^j\, } \rangle  \right]\, d \lambda^1 \cdots d \lambda^f\end{multline} uniformly on compact subsets of $(\C^*)^m$.

We begin by writing the above sum as an integral.  For $\al\in N\Sigma$, we write $\lfloor\al\rfloor= (\lfloor\al_1\rfloor, \dots,\lfloor\al_m\rfloor)\in N\Sigma\cap\Z^m$. For $\al=(\al^1,\dots,\al^f)\in (\Z^m\cap N\Sigma)^f$, we consider the $mf$-cube of width $\frac 1N$
$$R_{N,\al}:= \{(\la^1,\dots,\la^f)\in (\R^m)^f: \lfloor N\la^j\rfloor = \al^j,\ 1\le j\le f\}\,.$$

Then
\begin{equation} \frac{(m!)^f\,f!}{N^{mf}} \sum_{\sigma \in {\mathcal C}_{N,f}} \log \Pi_{N|\supp\sigma}(z, z) =
\int_{U_N}\log
\sum_{j=1}^f|\wh m^N_{\lfloor N\la^j\rfloor}(z)|^2 \,d\la^1\cdots d\la^f\,,\end{equation}
where  $d\la^j= m!\,d\la^j_1\cdots d\la^j_m$, and $$U_N=\bigcup\left\{R_{N,\al}: \al=(\al^1,\dots,\al^f)\in (\Z^m\cap N\Sigma)^f,\ \al^j\neq \al^{j'} \ \mbox{for\ } j\neq j'\right\}.$$ It then follows from \eqref{CNf} and the estimate $\vol(\Sigma^f\triangle U_N)=O(1/N)$ that \begin{equation}\label{sumtoint} \frac{1}{C(N, f)} \sum_{\sigma \in {\mathcal C}_{N,f}} \log \Pi_{N|\supp\sigma}(z, z) =
\int_{\Sigma^f}\log
\sum_{j=1}^f|\wh m^N_{\lfloor N\la^j\rfloor}(z)|^2 \,d\la^1\cdots d\la^f +E_N(z)\,,\end{equation} where
$$|E_N(z)|\le \frac {C_m}{N}\,\max_{\be^j\in \Z^m\cap N\Sigma}\left|\log
\sum_{j=1}^f|\wh m^N_{\be^j}(z)|^2\right|.$$  As in the proof of Lemma \ref{uniform}, we conclude from Lemma \ref{ptwisebound} that \begin{equation}\label{dblbound}\max_j\{-Nb_{\be^j/N}(z)\}-C'_m \le \log
\sum_{j=1}^f|\wh m^N_{\be^j}(z)|^2 \le\max_j\{-Nb_{\be^j/N}(z)\}+ m\log N +C_m'+\log f.\end{equation}
Therefore, there are positive constants $C,\ C'$ depending only on $m,f$ such that
\begin{equation}\label{ENbound}|E_N(z)| \le C \sup_{\la\in\Sigma} b_\la(z) +C'\,.\end{equation}

\begin{lem}\label{uniformint} Let $\Psi:\Sigma^f\times (\C^*)^m\to \R$ be given by $$\Psi(\la, z)= \log
\sum_{j=1}^f|\wh m^N_{\lfloor N\la^j\rfloor}(z)|^2\,.$$ Then for all compact sets $K\subset  (\C^*)^m$,
$$\frac 1N \Psi(\la, z) \to\max_j\{ -b_{\la^j}(z)\}\qquad \mbox{uniformly on }\ \Sigma^f\times K\,.$$ \end{lem}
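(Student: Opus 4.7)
The plan is to read off the lemma directly from the pointwise mass bound of Lemma \ref{ptwisebound}. Write $\al^j := \lfloor N\la^j\rfloor\in N\Sigma\cap\Z^m$. Applying Lemma \ref{ptwisebound} to each monomial $\wh m^N_{\al^j}(z)$ gives
$$-N\,b_{\al^j/N}(z) + \tfrac m2\log N - C'_m \;\le\; \log|\wh m^N_{\al^j}(z)|^2 \;\le\; -N\,b_{\al^j/N}(z) + m\log N + C'_m.$$
Combining these with the elementary sandwich $\max_j|w_j|^2 \le \sum_{j=1}^f|w_j|^2 \le f\max_j|w_j|^2$ (for positive reals), then taking logarithms and dividing by $N$, yields
$$\frac 1N\,\Psi(\la,z) \;=\; \max_j\bigl\{-b_{\al^j/N}(z)\bigr\} \;+\; O\!\left(\frac{\log N}{N}\right),$$
where the $O(\cdot)$ error is uniform in $\la\in\Sigma^f$ and $z\in K$.

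It therefore suffices to show that $b_{\lfloor N\la\rfloor/N}(z)\to b_\la(z)$ uniformly on $\Sigma\times K$, since the map $(a_1,\dots,a_f)\mapsto\max_j a_j$ is $1$-Lipschitz. The key step is to verify that the map $b:\Sigma\times K\to\R$ defined by \eqref{bx} is (jointly) continuous. Continuity on $\Sigma^\circ\times K$ is immediate from the explicit formula; at boundary points $x\in\d\Sigma$ where some $x_j$ or $x_0=1-|x|$ vanishes, continuity follows from the standard extension of $t\log t$ by $0$ at $t=0$, together with boundedness of $\log|z_j|$ on the compact set $K\Subset(\C^*)^m$ (which eliminates any singularity from the linear term $-\sum_j x_j\log|z_j|^2$). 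Compactness of $\Sigma\times K$ then promotes continuity to uniform continuity.

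Since $\|\lfloor N\la\rfloor/N-\la\|_\infty\le 1/N$ uniformly in $\la\in\Sigma$, this uniform continuity yields
$$\sup_{\la\in\Sigma,\,z\in K}\bigl|b_{\lfloor N\la\rfloor/N}(z) - b_\la(z)\bigr| \;\longrightarrow\; 0\,,$$
and combining with the displayed identity above gives the claimed uniform convergence on $\Sigma^f\times K$. I expect no serious obstacle: everything is controlled once one observes that the logarithmic singularities in $b_x(z)$ are harmless --- the $x_j\log x_j$ terms extend continuously across $\d\Sigma$, while the $x_j\log|z_j|^2$ terms are linear in $x$ with bounded coefficients on $K$.
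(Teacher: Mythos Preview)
Your proof is correct and follows essentially the same route as the paper's: both use Lemma~\ref{ptwisebound} together with the sandwich $\max_j|w_j|^2\le\sum_j|w_j|^2\le f\max_j|w_j|^2$ to reduce to $\frac1N\Psi(\la,z)=\max_j\{-b_{\lfloor N\la^j\rfloor/N}(z)\}+O((\log N)/N)$, and then invoke uniform continuity of $b$ on $\Sigma\times K$ to pass from $\lfloor N\la\rfloor/N$ to $\la$. Your version is in fact slightly more explicit, since you spell out why $b$ extends continuously to $\d\Sigma$ (via $t\log t\to 0$) whereas the paper simply asserts the uniform continuity implicitly.
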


\begin{proof} Let $\epsilon>0$ be arbitrary.  By \eqref{dblbound}, we can choose $N_0$ such that $$\left|\frac 1N \log
\sum_{j=1}^f|\wh m^N_{\be^j}(z)|^2 -  \max_j\{ -b_{\be^j/N}(z)\}\right|\le \ep \quad \forall \be\in\Sigma^f,\ \forall z\in (\C^*)^m,\ \forall N\ge N_0\,.$$ We can choose $N_0$ large enough so we also have $$|\al-\la|<\frac 1{N_0} \ \implies\ |b_\al(z)-b_\la(z)|<\epsilon \quad \forall \al,\la\in\Sigma,\ \forall z\in K\,.$$  Thus, for all $(\la,z)\in \Sigma^f\times K$ and $N>N_0$, we have
\begin{multline*}\left|\frac 1N \Psi(\la,z)- \max_j\{-b_{\la^j}(z)\}\right|\le
\left|\frac 1N \Psi(\la,z)- \max_j\{-b_{\lfloor N\la^j\rfloor/N}(z)\right|\\ +\left |\max_j\{-b_{\lfloor N\la^j\rfloor/N}(z)- \max_j\{-b_{\la^j}(z)\}\right| <2\epsilon.\end{multline*}
\end{proof}
 The desired uniform convergence  \eqref{sumlim} follows from \eqref{bx}, \eqref{sumtoint}, \eqref{ENbound}, and Lemma \ref{uniformint}, which completes the proof of Theorem \ref{main}.

The same argument gives the proof of Theoem \ref{mainIV}.

\subsection{Computing the explicit formula: Proof of Corollary \ref{maincor}}\label{explicit}
For $r\in\R^m$, we write $e^r=(e^{r_1},\dots, e^{r_m})$, so that $\sum e^{\rho_j} = \|e^{\rho/2}\|^2 = |e^\rho|$.

Recalling \eqref{bpoint}, we write
$$b(\la;\rho):=b_{\{\la\}}(e^\rho)= \langle
\wh\lambda, \log \wh\lambda \rangle - \langle \rho, \lambda
\rangle +\log\left(1+|e^\rho|\right) \ge 0\,.$$

Therefore,
\begin{equation}\label{reform}
\int_{\Sigma^f}
 \max_{j = 1, \dots, f} \left[\langle \rho, \lambda^j
\rangle -\langle
\wh\lambda^j, \log \wh\lambda^j \rangle  \right]\, d \lambda^1 \cdots d \lambda^f= \log (1+|e^\rho|)-\int_{\Sigma^f}
 \min_{j = 1, \dots, f} b(\la^j;\rho)\, d \lambda^1 \cdots d \lambda^f \,. \end{equation}

We shall use the following elementary probability formula: Let $X$ be a  non-negative random variable  on a probability space $(\Omega,dP)$, and let $D_X(t):=P(X\le t)$ be its distribution function.  The expected value of $X$ is given by
$$\E(X)= \int X\,dP = \int_0^\infty t\,dD_X(t)
=\lim_{r\to\infty}\int_0^r t\,dD_X(t) \,,$$
where
$$\int_0^r t\,dD_X(t) =
rD_X(r) - \int_0^r D_X(t)\,dt = \int_0^r [D_X(r)-D_X(t)]\,dt\,.$$
Letting $r\to\infty$, we have by Lebesgue monotone convergence
\begin{equation}\label{elemprob} \E(X)=\int X\,dP = \int_0^\infty [1-D_X(t)]\,dt\,.\end{equation}

We let $$D_b(t;\rho): = P\{\la\in\Sigma:b(\la;\rho)\le t\}$$ be the distribution function for $b(\cdot;\rho)$, where $dP(\la) = {m!} \,d\la_1\cdots d\la_m$.  The distribution function for the random variable $$X(\la^1,\dots,\la^f):=\min\{b(\la^1),\dots,b(\la^f)\}$$ on $\Sigma^f$ (with the product measure $dP(\la^1)\cdots dP(\la^f)$) is given by $$D_X =1-(1-D_b)^f\,.$$
It then follows from \eqref{reform}--\eqref{elemprob} that
\begin{equation}\label{rereform}  \int_{\Sigma^f}
 \max_{j = 1, \dots, f} \left[\langle \rho, \lambda^j
\rangle -\langle
\wh\lambda^j, \log \wh\lambda^j \rangle  \right]\, d \lambda^1 \cdots d \lambda^f=\log (1+|e^\rho|)-\int_0^\infty \left[1-D_b(t;\rho)\right]^f dt\,. \end{equation}
Corollary \ref{maincor} follows immediately from Theorem \ref{main} and \eqref{rereform}.\qed

\subsubsection{The dimension 1 case} We now further evaluate $D_b$ when the dimension
 $m=1$. In this case, $$b(\la;\rho) = \la\log\la +(1-\la)\log(1-\la) - \rho\la +\log(1+e^\rho)\,,\quad 0\le \la\le 1,\; \rho\in\R\,.$$  Since $b$ is a convex function of $\la$ (taking the minimum value $0$ when $\la= e^\rho/(1+e^\rho)$\,), we have $D_b(t;\rho)= \wt g(t,\rho)-g(t,\rho)$ for $t\ge 0$, where $g(\cdot,\rho)\le \wt g(\cdot,\rho)$ are the branches of $b(\cdot,\rho)\inv$.  Precisely, $g=g(t,\rho),\,\wt g= \wt g(t,\rho)$ are given by   
$$\begin{array}{llll} 0\le g\le \wt g\le 1,\\ b(g;\rho) = t  \quad  \mbox{if } \ t\le \log(1+e^\rho)\,, \quad  & b(g;\rho) =0  & \mbox{if } \ t> \log(1+e^\rho)\,,\\
b(\wt g;\rho) = t  \quad \mbox{if } \ t\le \log(1+e^{-\rho})\,, \quad  & b(\wt g;\rho) =1  & \mbox{if } \ t> \log(1+e^{-\rho})\,.\end{array}$$
We have the symmetry $b(\la;\rho)=b(1-\la;-\rho)$, and hence $\wt g(t,\rho)= 1-g(t,-\rho)$.
Therefore,
\begin{equation}\label{Db} D_b(t;\rho) = 1-g(t,\rho)-g(t,-\rho)\,,\end{equation} where
$g(\cdot,\rho):[0,+\infty)\to [0,e^\rho/(1+e^\rho)]$ is given by:
\begin{equation}\label{lambda1}\begin{array}{ll}
b(g(t,\rho),\rho) = t,\quad &\mbox{if }\ 0\le t
\le \log (1+e^\rho), \\g(t,\rho)=0, &\mbox{if }\  t \ge \log (1+e^\rho).\end{array}\end{equation}

\section{General toric \kahler potentials}\label{TORIC} 

We now sketch the proof of Theorem \ref{maintoric}. It is almost
the same as in the Fubini-Study case but requires the
generalization of Lemma \ref{ptwisebound} and
then Lemma \ref{uniform}.

As discussed in \cite{SoZ}, the toric norming constants can be
written in terms of the symplectic potential as follows:
\begin{equation} \label{INTpf}
Q_{G_N(\phi, dV_{\phi})}(\alpha) = \int_{P}e^{-N (u_{\phi} (x) +
\langle \frac{\alpha}{N} - x, \log \mu_{\phi}^{-1}(x) \rangle} dx.
\end{equation}
Here, $\mu_{\phi}(e^{\rho/2}) = \nabla_{\rho} \phi (e^{\rho/2})$
is the moment map determined by $\phi$.  Applying steepest descent
to the integral, we find that there exists only one critical point
at $x = \mu_{\phi}(e^{\rho/2})$, and we conclude that 
 \begin{equation} \label{LOGNORM} \frac{1}{N} \log Q_{G_N(\phi, dV_{\phi})} (\alpha) = u_{\phi}\left(\frac{\alpha}{N}\right) +O\left(\frac{\log N}N\right)
 \end{equation} uniformly  \cite[(25)]{SoZ3}.

The logarithmic asymptotics (\ref{LOGNORM}) is the only
non-obvious aspect of the logarithmic mass asymptotics.
 The  \szego kernel for a single lattice point (on the diagonal)
 equals
$$\Pi_{N,Q | \alpha}(e^{\rho/2}, e^{\rho/2}) = \frac{e^{\langle \alpha, \rho \rangle} e^{-
N
 \phi(e^{\rho/2})}}{Q_{G_N(\phi, dV_{\phi})}}.$$
The analogue of
Lemma \ref{ptwisebound} for a general \kahler potential is \begin{equation}
\label{bpoint2}  \log \Pi_{N,Q | N x}(e^{\rho/2},
e^{\rho/2})  =N \left(\langle x, \rho \rangle -\phi(e^{\rho/2}) -
u_{\phi}(x )\right) +O(\log N)\,,
\end{equation} which follows from (\ref{LOGNORM}) and \cite[(55)]{SoZ3}.  

For a fewnomial \szego kernel with  a finite set $S$ of lattice
points, the analogue of  Lemma \ref{uniform} is that
\begin{equation} \label{MAX}  \frac{1}{N} \log \Pi_{N,Q | N S}(e^{\rho/2},
e^{\rho/2}) = \max_{\lambda \in S} (\langle \lambda, \rho
\rangle  - u_{\phi}(\lambda )) -\phi(e^{\rho/2}) +O\left(\frac{\log N}{N}\right). \end{equation}
The proof is the same as that of Lemma \ref{uniform}, using (\ref{bpoint2}).  With
this modification, the remainder of the proof of Theorem
\ref{maintoric} is the same as that of Theorem \ref{main}.

\section{The ${\mathbf T}^m$ ensemble}\label{KAC} 

Finally, we indicate the modifications needed to deal with the fewnomial Kac-Hammersley
ensemble (\ref{KHENS}) . This is quite different from the case of pluri-subharmonic
weights because the \szego kernel has quite different (much
weaker) asymptotic properties. But for fewnomial \szego kernels
the distinction is not too severe.

In this case, we use the $L^2$ norm $\| \cdot\|_{{\mathbf T}^m}$ on the
real torus rather than the Fubini-Study norm. We therefore have
\begin{equation*}
\E_{{\mathbf T}^m|S}\left(|P(z)|^2_{{\mathbf T}^m}\right) = \sum_{\alpha,\be
\in S} \E(\la_{\alpha}\bar \la_\be
)\chi_{\alpha}(z)\overline{\chi_{\be}(z)} \;.\end{equation*} Since
 $\E(\la_{\alpha}\bar
\la_\be)=\delta_\al^\be$, we have:
\begin{equation}\label{EszegoS1}
\E_{{{\mathbf T}^m}|S}\left(|P(z)|^2_{{\mathbf T}^m}\right) = \sum_{\alpha \in
S} |\chi_\al(z)|^2 =\Pi_{{{\mathbf T}^m}|S}(z,z)\;,\end{equation} where
 $\Pi_{{{\mathbf T}^m}|S}$ is the orthogonal projection onto $\poly(S) \subset
L^2({(\C^*)^m}, \delta_{{{\mathbf T}^m}})$.  It then follows by
expressing the Gaussian in spherical coordinates that the
expectation in the fewnomial Kac-Hammersley ensemble  is given by
$$\E_{KH}(|P(z)|^2_{{\mathbf T}^m}) =\frac{1}{\#
S}\E_{KH}\left(|P(z)|^2_{{\mathbf T}^m}\right)= \frac{1}{\#
S}\Pi_{{{\mathbf T}^m}|S}(z,z)\;.$$

It is clear that
\begin{equation} \Pi_{\T|S} (z,w) =  \sum_{\alpha \in S} \langle z,\bar w\rangle^{\alpha}\,.
\end{equation}

Therefore,
\begin{equation} \Pi_{\T,S} (z,z) =  \sum_{\alpha \in S}
|z^{\alpha}|^2 = \sum_{\alpha \in S} e^{ \langle \rho, \alpha
\rangle}, \;\; z = e^{i \phi + \rho/2}\,.
\end{equation}

The potential in this case is
\begin{equation} F_N^f (z): =  \frac{1}{C(N, f)}
\sum_{S \in {\mathcal F}_{N,f}} \log \Pi_{\T,S} (z,z)\,.
\end{equation}

\begin{prop}
$$\lim_{N \to \infty} \frac 1N\, F_N^f(e^{\rho/2})  =\int_{\Sigma^f} \max\{\langle
x_1,\rho \rangle  \dots, \langle x_f,\rho \rangle \}\, dx_1 \cdots
dx_f\,. $$
\end{prop}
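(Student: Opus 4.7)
The plan is to mirror the proof of Theorem \ref{main}, taking full advantage of the fact that in the Kac--Hammersley ensemble the mass of a monomial is exactly exponential: $|\chi_\alpha(e^{\rho/2})|^2 = e^{\langle \rho,\alpha\rangle}$, with no Stirling or symplectic-potential correction. This replaces the delicate Lemmas \ref{ptwise}--\ref{ptwisebound} by a one-line identity and makes the whole argument a routine Laplace/Riemann-sum computation.

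First, I would establish the analogue of Lemma \ref{uniform}. For any $S = \{\lambda^1,\dots,\lambda^f\} \subset \Sigma \cap \frac{1}{N}\Z^m$, one has
$$\Pi_{\T|NS}(e^{\rho/2},e^{\rho/2}) \;=\; \sum_{j=1}^f e^{N\langle \rho,\lambda^j\rangle},$$
so the elementary sandwich
$$\max_j N\langle \rho,\lambda^j\rangle \;\le\; \log\Pi_{\T|NS}(e^{\rho/2},e^{\rho/2}) \;\le\; \max_j N\langle \rho,\lambda^j\rangle + \log f$$
yields, after dividing by $N$, the uniform limit $\frac{1}{N}\log\Pi_{\T|NS}(e^{\rho/2},e^{\rho/2}) \to \max_j \langle \rho,\lambda^j\rangle$, uniformly over $S$ and locally uniformly in $\rho$.

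Second, exactly as in the derivation of \eqref{sumtoint}, I would rewrite the average over spectra as a Riemann-type integral on $\Sigma^f$ using the cubes $R_{N,\alpha}$ and the floor map $\lfloor N\lambda^j\rfloor$:
$$\frac{1}{C(N,f)} \sum_{\sigma \in \mathcal{C}_{N,f}} \log\Pi_{\T|\supp\sigma}(e^{\rho/2},e^{\rho/2}) \;=\; \int_{\Sigma^f} \log\sum_{j=1}^f e^{\langle \rho,\lfloor N\lambda^j\rfloor\rangle}\, d\lambda^1 \cdots d\lambda^f \;+\; E_N(\rho),$$
where $d\lambda^j = m!\,d\lambda_1^j\cdots d\lambda_m^j$ and $E_N(\rho)$ collects contributions from the diagonal set $\Sigma^f \setminus U_N$ (where two floors coincide). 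Since $\vol(\Sigma^f \triangle U_N) = O(1/N)$ and the log-integrand is bounded by $N\max_\alpha|\langle \rho,\alpha/N\rangle|+\log f$, we have $|E_N(\rho)| \le C\|\rho\|_1$ locally uniformly, which becomes negligible after division by $N$.

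Third, divide by $N$ and let $N\to\infty$, using uniform convergence on $\Sigma^f\times K$ for compact $K \subset \R^m$: the bound $|\langle \rho,\lfloor N\lambda^j\rfloor/N - \lambda^j\rangle| \le \|\rho\|_\infty/N$ combined with the first step gives
$$\frac{1}{N}\log\sum_{j=1}^f e^{\langle \rho,\lfloor N\lambda^j\rfloor\rangle} \;\longrightarrow\; \max_{j=1,\dots,f}\langle \rho,\lambda^j\rangle$$
uniformly on $\Sigma^f \times K$. Integrating over $\Sigma^f$ (and applying bounded convergence) produces the stated limit. There is no genuine obstacle: the only thing requiring a moment's care is the diagonal/boundary error $E_N$, and this is dispatched exactly as in \eqref{ENbound} in the Fubini--Study case, only more easily since the exponents are linear in $\alpha$.
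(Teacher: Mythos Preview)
Your proposal is correct and follows essentially the same approach as the paper's outline: the sandwich $\max_j N\langle\rho,\lambda^j\rangle \le \log\Pi_{\T|NS} \le \max_j N\langle\rho,\lambda^j\rangle + \log f$ replaces Lemma~\ref{ptwisebound}, and the average over spectra is converted to a Riemann integral over $\Sigma^f$ exactly as in \eqref{sumtoint}. The paper's own proof is only a two-line sketch stating the same asymptotic and Riemann-sum step; your version is simply the fleshed-out argument mirroring \S\ref{proofmain}, with the expected simplifications coming from the trivial norming constants.
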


\medskip\noindent {\it Outline of the proof:\/}
Indeed, \begin{equation} \log \; \sum_{\alpha \in S} e^{ \langle \rho, \alpha
\rangle} = N\log \sum_{\alpha \in S} e^{ \langle \rho,
\al/{N} \rangle}  \sim  N \max_{\alpha \in S}
\{\langle \rho, \al/{N} \rangle\}. \end{equation}
 Hence,
\begin{equation*} F_N^f(e^{\rho})  = \frac{1}{C(N, f)}
\sum_{S \in {\mathcal F}_{N,f}}  \log  \sum_{\alpha \in S} e^{ \langle \rho, \alpha
\rangle}\sim N \int_{\Sigma^f} \max\{\langle
x^1,\rho \rangle  \dots, \langle x^f,\rho \rangle \} dx^1 \cdots
dx^f. \end{equation*}
\qed

\medskip
We note that for each $(x^1, \dots, x^f)$, the function $M_{(x^1,
\dots, x^m)}(\rho) =  \max\{\langle x^1,\rho \rangle \dots,
\langle x^f,\rho \rangle \}$ is a piecewise linear convex function.
It follows  that the integral defines a convex function of $\rho$.

In dimension one, if all $x^j \geq 0$,  $$\max\{\rho x^1, \dots,
\rho x^f) = \left\{
\begin{array}{ll} \rho \max\{x^1, \dots, x^f\}, & \rho \geq 0, \\
& \\ \rho \min \{x^1, \dots, x^f\}, & \rho \leq 0. \end{array}
\right.$$ Hence,
\begin{equation} F_N^k(e^{\rho}) \sim   \left\{
\begin{array}{ll}  N \rho \{\;\int_{[0, 1]^f} \max\{x^1, \dots, x^f\} dx^1 \cdots
dx^f\} , & \rho \geq 0  \\ &  \\  N \rho \{\;\int_{[0, 1]^f}\min
\{x^1, \dots, x^f\} \} dx^1 \cdots dx^f\} , & \rho \leq 0
\end{array} \right.
\end{equation}
Thus, $F_N^k(e^{\rho})$ is piecewise linear in $\rho$ with a
corner at $\rho = 0$.  In dimension one,
\begin{equation}\frac 1N\,\E_{N,f}(Z_{P^N})=\frac {\sqrt{-1}}{2\pi N}\ddbar F_N^k\to
\delta_{S^1}\,.\end{equation}


\begin{thebibliography}{BBB0}

\bibitem[BBS]{BBS} D. J. Bates, F.  Bihan and F.  Sottile,
 Bounds on the number of real solutions to polynomial equations. Int.\ Math.\ Res.\ Not.\ IMRN 2007, no.\ 23, Art.\ ID rnm114, 7 pp. 
 

\bibitem[BT]{BT} E. Bedford and B. A. Taylor, A new capacity for
plurisubharmonic functions, {\it Acta Math.} 149 (1982), 1--40.

\bibitem[Be]{Be} D. N. Bernstein, The number of roots of a system of
equations, {\it Functional Anal.\ Appl.}  9  (1975),  183--185.

\bibitem[BRS]{BRS} F. Bihan, J. M. Rojas and F.  Sottile,
 On the sharpness of fewnomial bounds and the number of components of fewnomial
  hypersurfaces. Algorithms in algebraic geometry, 15–20, IMA Vol.\ Math.\ Appl., 146, Springer, New York,
  2008.


\bibitem[BS]{BS} T. Bloom and B. Shiffman, Zeros of random polynomials on
$\C^m$, {\it Math.\ Res.\ Lett.} 14 (2007), 469Ð-479.

\bibitem[Bu]{Bu} P. B\"urgisser,  Average Euler characteristic of random real algebraic varieties. C. R. Math.\ Acad.\ Sci.\ Paris 345 (2007), no.\ 9, 507--512. 

\bibitem[GW]{GW} D. Gayet and J.-Y. Welschinger, Exponential rarefaction
of real curves with many components, (arXiv:1005.3228v1).

\bibitem[Ha]{H} J. M. Hammersley, The zeros of a random polynomial, {\it
Proceedings
of the Third Berkeley Symposium on Mathematical Statistics and Probability,
1954--1955\/}, vol.\ II,  89--111, University of California Press, Berkeley
and Los Angeles, 1956.

\bibitem[Ka1]{Ka1} B. Ya. Kazarnovskii,
On zeros of exponential sums. (Russian) Dokl.\ Akad.\ Nauk SSSR 257
(1981), no.\ 4, 804--808.

\bibitem[Ka2]{Ka2} B. Ya. Kazarnovskii,
Newton polyhedra and roots of systems of exponential sums.
(Russian) Funktsional.\ Anal.\ i Prilozhen. 18 (1984), no.\ 4,
40--49, 96.


\bibitem[Kh]{KH} A. G. Khovanskii, {\it Fewnomials}, Trans. Math
Monographs 88, AMS Publications, Providence, Rhode Island, 1991.


\bibitem[Kl]{Kl} M. Klimek, {\it Pluripotential Theory}, London Math.\
Soc.\ Monographs, New Series 6, Oxford University Press, New York,
1991.


\bibitem[Ko]{Ko} A. G. Kouchnirenko,   Poly\`edres de Newton et nombres de
Milnor,  {\it Invent.\ Math.} 32 (1976), 1--31.



\bibitem[Ro]{Ro} J. M. Rojas,
On the average number of real roots of certain random sparse
polynomial systems. In: {\it The  mathematics of numerical
analysis (Park City, UT, 1995)}, 689--699, Lectures in Appl.\
Math. 32, Amer.\ Math.\ Soc.,  Providence, RI, 1996.



\bibitem[STZ]{STZ} B. Shiffman, T. Tate and S. Zelditch,
 Distribution
laws for integrable eigenfunctions. Ann.\ Inst.\ Fourier 54 (2004),
no.\ 5, 1497--1546.

\bibitem[SZ1]{SZ} B. Shiffman and S. Zelditch, Distribution of zeros of
random and quantum chaotic sections of positive line bundles, {\it
Comm.\ Math.\ Phys.} 200 (1999), 661--683.

\bibitem[SZ2]{SZ2} B. Shiffman and S. Zelditch, Random polynomials with
prescribed Newton polytope, J. Amer.\ Math.\ Soc. 17 (2004), no.\ 1,
49--108.



\bibitem[SZ3]{SZ3} B. Shiffman and S. Zelditch, Number variance of random zeros on complex
manifolds, Geom.\ Funct.\
Anal.  18, No.\ 4 (2008), 1422--1475.



\bibitem[SZZ]{SZZ} B. Shiffman, S. Zelditch and Q. Zhong, Random zeros on complex manifolds: conditional
expectations, (arXiv:1005.4166v1).

\bibitem[ShSm]{ShSm} M. Shub and S.  Smale, Complexity of Bezout's theorem. II.
 Volumes and probabilities. {\it  Computational algebraic geometry}
  (Nice, 1992), 267--285, Progr.\ Math., 109, Birkh\"auser Boston, Boston, MA, 1993.

\bibitem[SoZ1]{SoZ1} J. Song and S. Zelditch,
Convergence of Bergman geodesics on $\mathbf{CP}^1$, Ann.\ Inst.\ Fourier   57, no.\ 6 (Festival Colin de
Verdi\`ere) (2007), 2209--2237.

\bibitem[SoZ2]{SoZ} J. Song and S. Zelditch,  Bergman metrics and geodesics in the
space of K\"ahler metrics on  toric varieties, Analysis  \& PDE
3, No.\ 2 (2010), 295--358.

\bibitem[SoZ3]{SoZ3} J. Song and S. Zelditch,
 Test configurations, large deviations and geodesic rays on toric varieties,  (arXiv:0712.3599).

\bibitem[Sot]{So} F. Sottile, Enumerative real algebraic geometry.  Algorithmic
 and quantitative real algebraic geometry (Piscataway, NJ, 2001),  139--179,
  DIMACS Ser.\ Discrete Math.\ Theoret.\ Comput.\ Sci., 60, Amer.\ Math.\ Soc., Providence, RI, 2003.

\bibitem[St1]{S} B. Sturmfels, On the number of real roots of a sparse
polynomial system,  {\it Hamiltonian and gradient flows,
algorithms and control\/}, Fields Inst.\ Commun. 3, Amer.\ Math.\
Soc., Providence, RI, 1994, pp.\ 137--143.

\bibitem[St2]{St2} B. Sturmfels,
Polynomial equations and convex polytopes. Amer. Math. Monthly 105
(1998), no.\ 10, 907--922.


\end{thebibliography}
\end{document}